\colorlet{symbols}{black}
\tikzset{
	bdot/.style={circle,fill=black,draw=black,inner sep=0pt,minimum size=1pt},
	dot/.style={circle,fill=symbols,draw=symbols,inner sep=0pt,minimum size=0.4pt},
	xi/.style={circle,fill=symbols!10,draw=symbols,inner sep=0pt,minimum size=1.5pt},
	xi_old/.style={circle,fill=symbols!10,draw=symbols,inner sep=0pt,minimum size=1.2mm},
	ddd/.style={draw=black,dash pattern=on 0.4pt off 0.4pt, dash phase=0.4pt},
	Ip/.style={draw=symbols},
	I/.style={draw=symbols,
		decorate, decoration={zigzag,amplitude=0.4pt,segment length = 0.6pt,pre length=0.5pt,post length=0.5pt}},
	>=stealth,
	}
\def\DeclareSymbol#1#2#3{\expandafter\gdef\csname MH@symb@#1\endcsname{\tikz[baseline=#2,scale=0.15,draw=symbols]{#3}}}
\def\<#1>{\csname MH@symb@#1\endcsname}
\def\DD{\mathscr{D}}
\def\TT{\mathscr{T}}
\def\MM{\mathscr{M}}
\def\CC{\mathcal{C}}
\def\CD{\mathcal{D}}
\def\CB{\mathcal{B}}
\def\CG{\mathcal{G}}
\def\CH{\mathcal{H}}
\def\CS{\mathcal{S}}
\def\CR{\mathcal{R}}
\def\CI{\mathcal{I}}
\def\CF{\mathcal{F}}
\def\CU{\mathcal{U}}
\def\CV{\mathcal{V}}
\def\CJ{\mathcal{J}}
\def\CM{\mathcal{M}}
\def\CA{\mathcal{A}}
\def\CL{\mathcal{L}}
\def\CP{\mathcal{P}}
\def\CT{\mathcal{T}}
\def\E{{\mathbf E}}
\def\P{\mathbf{P}}
\def\R{\mathbf{R}}
\def\N{\mathbf{N}}
\def\one{\mathbf{1}}
\def\scal#1{\langle#1\rangle}
\def\PPi{\boldsymbol{\Pi}}
\def\eps{\varepsilon}
\def\div{\mathop{\rm div}}
\def\eqdef{:=}
\newtheorem{theorem}{Theorem}[section]
\newtheorem{lemma}[theorem]{Lemma}
\newtheorem{proposition}[theorem]{Proposition}
\theoremstyle{definition}
\newtheorem{definition}[theorem]{Definition}
\newtheorem{remark}[theorem]{Remark}
\title[Singular stochastic PDEs]{Singular stochastic PDEs}
\author[Martin Hairer]
{Martin Hairer\thanks{I am delighted to thank the Institute for Advanced Study 
for its warm hospitality and the `The Fund for Math' for funding my stay there.
This work was supported by the Leverhulme trust through a leadership award,
the Royal Society through a Wolfson research award, and the ERC through a consolidator award.
}}
\begin{document}

\begin{abstract}
We present a series of recent results on the well-posedness of very singular
parabolic stochastic partial differential equations. These equations are such that the question of
what it even means to be a solution is highly non-trivial. This problem can
be addressed within the framework of the recently developed theory of ``regularity structures'',
which allows to describe candidate solutions locally by a ``jet'', but where
the usual Taylor polynomials are replaced by a sequence of custom-built objects.
In order to illustrate the theory, we focus on the particular example of the Kardar-Parisi-Zhang 
equation, a popular model for interface propagation.
\end{abstract}

\begin{classification}
Primary 60H15; Secondary 81S20, 82C28.
\end{classification}

\begin{keywords}
Regularity structures, renormalisation, stochastic PDEs.
\end{keywords}

\maketitle\tableofcontents

\section{Introduction}

In this article, we report on a recently developed theory \cite{Regularity} allowing to give a robust meaning to
a large class of stochastic partial differential equations (SPDEs) that have traditionally been
considered to be ill-posed.
The general structure of these equations is
\begin{equ}[e:genEqu]
\CL u = F(u) + G(u)\xi\;,
\end{equ}
where the dominant linear operator $\CL$ is of parabolic (or possibly elliptic) type, $F$ and $G$
are local nonlinearities depending on $u$ and its derivatives of sufficiently low order, and $\xi$ 
is some driving noise. Problems arise when $\xi$ (and therefore also $u$) is so singular that 
some of the terms appearing in $F$ and / or the product between $G$ and $\xi$ are ill-posed.
For simplicity, we will consider all of our equations in a \textit{bounded} spatial region with
periodic boundary conditions.  

One relatively simple example of an ill-posed equation of the type \eqref{e:genEqu} 
is that of a system of equations with a nonlinearity of Burgers
type driven by space-time white noise:
\begin{equ}[e:Burgers]
\partial_t u = \partial_x^2 u + F(u)\,\partial_x u + \xi\;.
\end{equ}
(See Section~\ref{sec:prob} below for a definition of the space-time white noise $\xi$.)
Here, $u(x,t) \in \R^n$ and $F$ is a smooth matrix-valued function, so that one can in general
not rewrite the nonlinearity as a total derivative.
In this example, which was originally studied in \cite{Rough} but then further analysed in
the series of articles \cite{Jan,Hendrik,JanHendrik}, solutions at any fixed instant of time 
have exactly the same regularity (in space) as Brownian motion. As a consequence, $\partial_x u$ is
expected to ``look like''
white noise. It is of course very well-known from the study of ordinary stochastic differential equations (SDEs)
that in this case the product $F(u)\,\partial_x u$ is
``unstable'': one can get different answers depending on the type of limiting procedure used to define it.
This is the reason why one has different solution theories for SDEs: one obtains different answers, 
depending on whether they are interpreted in the It\^o or in
the Stratonovich sense \cite{Ito,Strat,WongZakai2,WongZakai}.

Another example is given by the KPZ equation \cite{KPZOrig} which can formally be written as
\begin{equ}[e:KPZ]
\partial_t h = \partial_x^2 h + (\partial_x h)^2 - C + \xi\;,
\end{equ}
and is a very popular model of one-dimensional interface propagation.
As in the case of \eqref{e:Burgers}, one expects solutions to this equation to ``look like''
Brownian motion (in space) for any fixed instant of time. Now the situation is much worse however:
the nonlinearity looks like the square of white noise, which really shouldn't make any sense!
In this particular case however, one can use a ``trick'', the Cole-Hopf transform, to reduce the problem to an
equation that has an interpretation within the framework of classical SPDE theory
\cite{MR1462228}. Furthermore, this ``Cole-Hopf solution'' was shown in \cite{MR1462228}
to be the physically relevant solution since it describes the mesoscopic fluctuations of
a certain microscopic interface growth model, see also \cite{Milton}.
On the other hand, the problem of interpreting these solutions
directly at the level of \eqref{e:KPZ} and to show their stability under suitable approximations 
had been open for a long time, before being addressed in \cite{KPZ}.

Both examples mentioned so far have only one space dimension. This particular feature
(together with some additional structure in the case of the KPZ equation, see Remark~\ref{rem:appl} below) allowed to treat them
by borrowing estimates and techniques from the theory of controlled rough paths \cite{Lyons,Max,Peter}.
This approach breaks down in higher spatial dimensions. 
More recently, a general theory of ``regularity structures'' was developed in \cite{Regularity}, 
which unifies many previous approaches and allows in particular to treat higher dimensional problems. 

Two nice examples of equations that can be treated with this new approach are given by
\minilab{e:SPDEs}
\begin{equs}
\partial_t \Phi &= \Delta \Phi + C \Phi - \Phi^3 + \xi\;, \label{e:AC} \\
\partial_t \Psi &= -\Delta\bigl(\Delta \Psi + C \Psi - \Psi^3\bigr) + \div \xi\;,  \label{e:CH}
\end{equs}
in space dimension $d=3$.
These equations can be interpreted as the natural ``Glauber'' and ``Kawasaki'' dynamics
associated to Euclidean $\Phi^4$ field theory in the context of stochastic quantisation \cite{ParisiWu}.
It is also expected to describe the dynamical mesoscale fluctuations for phase coexistence
models that are ``almost mean-field'', see \cite{MR1317994}. These equations cease to have function-valued 
solutions in dimension $d \ge 2$, so that the classical interpretation of the cubic nonlinearity loses its
meaning there. In two dimensions, a solution theory for these equations was developed in
\cite{AlbRock91}, which was later improved in \cite{MR1941997,MR2016604,MR2365646}, 
see Section~\ref{sec:DPD} below. The case $d=3$ (which is the physically relevant one in the 
interpretation as dynamical fluctuations for phase coexistence models) had remained open
and was eventually addressed in \cite{Regularity}.

A final example of the kind of equations that can be addressed by the theory exposed in these notes
(but this list is of course not exhaustive) is a continuous analogue to the classical parabolic Anderson model \cite{MR1185878}:
\begin{equ}[e:PAM]
\partial_t u = \Delta u + u\,\eta + C u\;,
\end{equ}
in dimensions $d \in \{2,3\}$.
In this equation, $\eta$ denotes a noise term that is white in space, but constant in time.
This time, the problem is that in dimension $d \ge 2$, the product $u\,\eta$ ceases to make
sense classically, as a consequence of the lack of regularity of $u$. 

The following ``meta-theorem'' (formulated in a somewhat vague sense, precise formulations differ
slightly from problem to problem and can be found in the abovementioned articles) shows 
in which sense one can give meaning to all of these equations.

\begin{theorem}\label{theo:main}
Consider the sequence of classical solutions to 
any of the equations \eqref{e:Burgers}--\eqref{e:PAM}
with $\xi$ (resp.\ $\eta$) replaced by a smooth regularised noise $\xi_\eps$ and $C = C_\eps$ depending on $\eps$.
Then, there exists s choice $C_\eps \to \infty$ such that this sequence of solutions converges to
a limit in probability, locally in time. Furthermore, this limit is universal, i.e.\ does not depend
on the details of the regularisation $\xi_\eps$.
\end{theorem}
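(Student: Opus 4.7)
The plan is to give a unified treatment via the theory of regularity structures. For each equation I would first build an appropriate regularity structure $(A,T,G)$: a graded vector space $T$ whose basis is indexed by decorated trees (or, in the Burgers case, words) that encode the Picard iterates of the equation driven by $\xi_\eps$, together with a structure group $G$ describing how local expansions transform under shifts of the base point. The equation \eqref{e:genEqu} is then lifted to an abstract fixed-point problem for a "modelled distribution" $U$ taking values in $T$, in which the ill-defined nonlinear operations (the product $G(u)\xi$, the square $(\partial_x h)^2$, the cube $\Phi^3$, etc.) are replaced by purely algebraic multiplications on the symbols. The key input needed to translate abstract solutions back to genuine distributions is a \emph{model} $(\PPi,\Gamma)$ realising each symbol as an actual Schwartz distribution near each space-time point, in a way compatible with the group $G$.

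For the smooth regularised noise $\xi_\eps$, one has a \emph{canonical lift} $\PPi^\eps$ obtained simply by interpreting each symbol literally: $\<X>$ becomes $\xi_\eps$, a tree becomes the corresponding iterated convolution-product against the heat kernel, etc.\ With this canonical model the abstract fixed point for $U_\eps$ reconstructs precisely the classical solution $u_\eps$ of \eqref{e:genEqu}. The trouble is that $\PPi^\eps$ does not converge as $\eps\to 0$: the relevant trees correspond to divergent Feynman diagrams. Here the renormalisation group $\Ren$ enters. One identifies a finite-dimensional group of symbol substitutions that leave the abstract formulation of the equation invariant up to modification of the constants, and one shows that for an appropriate element $M_\eps\in \Ren$ the \emph{renormalised model} $\hat\PPi^\eps=M_\eps\PPi^\eps$ admits a limit. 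By a Wong--Zakai type computation, solving the abstract fixed point with $\hat\PPi^\eps$ and reconstructing yields the classical solution of the modified equation with a specific divergent counterterm $C=C_\eps$ (and for the system \eqref{e:Burgers} possibly additional drift corrections), where $C_\eps$ is read off from $M_\eps$.

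The heart of the matter is therefore convergence of $\hat\PPi^\eps$ to a limit $\hat\PPi$ in probability, in the natural topology on models. For this I would proceed symbol by symbol and use the Gaussianity of $\xi$ to expand each $\hat\PPi^\eps \tau$ into finitely many Wiener chaoses. Within each chaos the square $L^2$-norm is a finite sum of explicit integrals represented by Feynman-type graphs whose edges carry the heat kernel or its derivatives and whose vertices are labelled by integration or by a test function. One then bounds these graphs by a general power-counting lemma, verifying in each case that the overall degree of divergence is strictly better than what the regularity exponent of $\tau$ requires; the BPHZ-type subtractions performed by $M_\eps$ exactly remove the logarithmically or polynomially divergent sub-diagrams. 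A Kolmogorov-type argument together with equivalence of moments on a fixed chaos then upgrades the $L^2$ bounds to the convergence of $\hat\PPi^\eps$ in the required H\"older-type norm, and universality follows because the limit diagrams only depend on the covariance of $\xi$.

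With the stochastic estimates in hand, the conclusion is essentially deterministic. The abstract solution map $(\PPi,u_0)\mapsto U$ for the fixed-point problem is locally Lipschitz in the model, and the reconstruction operator $\CR$ that turns $U$ into a distribution is continuous in $\PPi$ as well; consequently $\CR U_\eps \to \CR U$ in probability, locally in time, on the event (of probability tending to $1$) that the solution does not blow up. Since $\CR U_\eps$ coincides with the classical solution of the equation with counterterm $C_\eps$ fixed by the choice of $M_\eps$, this proves the theorem. The main obstacle throughout is the third step: setting up the algebraic renormalisation so that $M_\eps$ acts as a modification of the equation's constants, and then pushing through the graph-by-graph stochastic bounds for every tree of negative homogeneity appearing in the structure, which is what limits the method to a regime where only finitely many symbols need to be renormalised (so-called sub-critical or super-renormalisable equations).
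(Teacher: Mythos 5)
Your proposal is correct and follows essentially the same route as the paper: build a tree-indexed regularity structure for the equation, lift to an abstract fixed point in $\CD^\gamma$, use the canonical lift of the mollified noise, renormalise via a finite-dimensional group $M_\eps$ acting on the model, prove convergence of the renormalised models by Wiener-chaos decomposition plus graph-by-graph power counting and a Kolmogorov argument, and then conclude by local Lipschitz continuity of the abstract solution map and of the reconstruction operator, identifying the reconstructed solution with the classical solution of the equation with counterterm $C_\eps$. The only thing the paper adds that you pass over is the preliminary discussion of why the simpler Da Prato--Debussche decomposition fails for these equations, which serves as motivation rather than as a step in the argument.
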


Besides these convergence results, the important fact here is that the limit is \textit{independent} of the
precise details of the regularisation mechanism.
In addition, the theory of regularity structures also yields rates 
of convergence, as well as
an intrinsic description of these limits. It also provides automatically a very detailed local
description of these limits. 

The starting point for the theory is the remark that the usual way
of determining the ``smoothness'' of a function is to measure how well it can locally be approximated by a 
polynomial. In the case of singular SPDEs, polynomials are a terrible way of approximating the solution.
Instead, we can fare much better if we ``custom-build'' a collection of functions / distributions 
out of the driving noise and approximate solutions locally by finite linear combinations of these
objects. Various renormalisation procedures can then be built into the ``multiplication table'' for 
these objects, which translates into diverging correction terms at the level of the original equations.

The aim of this article is to give an overview of the ingredients involved in the proof of a result
like Theorem~\ref{theo:main}. We structure this as follows. In Section~\ref{sec:Holder},
we recall a number of properties and definitions of H\"older spaces of positive (and negative!) order
that will be useful for our argument. In Section~\ref{sec:General}, we then explain how, using only standard tools,
it is possible to provide a robust solution theory for not-so-singular SPDEs, like for example
\eqref{e:SPDEs} in dimension $d=2$. Section~\ref{sec:regular} is devoted to a short overview of the main definitions
and concepts of the abstract theory of regularity structures which is a completely general way of formalising
the properties of objects that behave ``like Taylor polynomials''. Section~\ref{sec:SPDE} then finally shows how
one can apply this general theory to the specific context of the type of parabolic SPDEs considered above, 
how renormalisation procedures can be built into the theory,
and how this affects the equations.

Throughout the whole article, our argumentation will remain mostly at the heuristic level, but we will 
make the statements and definitions as precise as possible.

\subsection{An alternative approach}\label{sec:Bony}

A different approach to building solution theories for singular PDEs
was developed simultaneously to the one presented here by Gubinelli \& Al in \cite{PAMPreprint}.
That approach is based on the properties of Bony's paraproduct \cite{Bony,MR2682821,BookChemin}, in 
particular on the paralinearisation formula. 
It is also
able to deal with the KPZ equation (at least in principle) or the dynamical
$\Phi^4_3$ model \eqref{e:AC}, see \cite{Chouk}.

One advantage
is that in the paraproduct-based approach one generally deals with globally defined objects rather than 
the ``jets'' used in the theory of regularity structures. It also uses some already well-studied
objects, so that it can build on a substantial body of existing literature.
However, this comes at the expense of achieving a less clean break between the analytical and the algebraic aspects 
of a given problem and obtaining less detailed information about the solutions. 
Furthermore, its scope is not as wide as that of the theory of regularity structures.
For example, it cannot deal with classical one-dimensional parabolic SPDEs of the type 
\eqref{e:SPDE} below, for which we are able to obtain a result of Wong-Zakai type \cite{HPP}.
See also Remark~\ref{rem:appl} below for more details.

\section{Some properties of H\"older spaces}
\label{sec:Holder}

We recall in this section a few standard results from harmonic analysis that are
very useful to have in mind. Note first that the linear part of all of the equations
described in the introduction is invariant under some space-time scaling. In the case
of the heat equation, this is the parabolic scaling. In other words, if $u$ is
a solution to the heat equation, then $\tilde u(t,x) = u(\lambda^{-2} t, \lambda^{-1} x)$ is
also a solution to the heat equation. (Leaving aside the fact that we really consider 
solutions defined only on a fixed bounded domain.)

This suggests that we should look for solutions in function / distribution spaces 
respecting this scaling. Given a smooth compactly supported test function $\varphi$
and a space-time coordinate $z = (t,x)$, we
 henceforth denote by $\varphi_z^\lambda$ the test function
\begin{equ}
\varphi_z^\lambda(s,y) = \lambda^{-d-2} \varphi\bigl(\lambda^{-2}(s-t), \lambda^{-1}(y-x)\bigr)\;,
\end{equ}
where $d$ denotes the spatial dimension and the factor $\lambda^{-d-2}$ is chosen so that
the integral of $\varphi_z^\lambda$ is the same as that of $\varphi$. In the case of the 
stochastic Cahn-Hilliard equation \eqref{e:CH}, we would naturally use instead a temporal scaling
of $\lambda^{-4}$ and the prefactor would then be $\lambda^{-d-4}$.

With these notations at hand, we define spaces of distributions $\CC^\alpha$ for $\alpha < 0$
in the following way. Denoting by $\CB_\alpha$ the set of smooth test functions
$\varphi \colon \R^{d+1} \to \R$ that are supported in the centred ball of radius $1$ and such that 
their derivatives of order up to $1+|\alpha|$ are uniformly bounded by $1$, we set

\begin{definition}\label{def:Calpha}
Let $\eta$ be a distribution on $d+1$-dimensional space-time and let $\alpha < 0$. 
We say that $\eta \in \CC^\alpha$ if the bound
\begin{equ}
\bigl|\eta(\varphi_z^\lambda)\bigr| \lesssim \lambda^\alpha\;,
\end{equ}
holds uniformly over all $\lambda \in (0,1]$, all $\varphi \in \CB_\alpha$, and locally
uniformly over $z \in \R^{d+1}$.
\end{definition}

For $\alpha \ge 0$, we say that a function $f\colon \R^{d+1} \to \R$ belongs to $\CC^\alpha$
if, for every $z \in \R^{d+1}$ there exists a polynomial $P_z$ of (parabolic) degree at most
$\alpha$ and such that the bound
\begin{equ}
|f(z') - P_z(z')| \lesssim |z-z'|^\alpha\;,
\end{equ}
holds locally uniformly over $z$ and uniformly over all $z'$ with $|z'-z| \le 1$. 
Here, we say that a polynomial $P$ in $z = (t,x)$ is of parabolic degree $n$ if each monomial is of
degree $k$ in $t$ and $m$ in $x$ for some $k$ and $m$ with $m + 2k \le n$. In other words, the degree of the 
time variable ``counts double''. For $z = (t,x)$,
we furthermore write $|z| = |t|^{1/2} + |x|$. (When treating \eqref{e:CH}, 
powers of $t$ count four times and one writes $|z| = |t|^{1/4} + |x|$.)

\begin{remark}
Note that in both cases, the definition of $\CC^\alpha$ is local, i.e.\ we do not impose
anything concerning the behaviour at infinity. It is of course straightforward to modify the 
definitions in order to impose uniform control in space, possibly based on a weight function with
suitable properties. In our setting, we will always be in a situation where our objects are 
periodic in space and we only care about bounds over some compact interval in time, so this is not an issue.
\end{remark}

We now collect a few important properties of the spaces $\CC^\alpha$.

\subsection{Analytical properties}

First, given a function and a distribution (or two distributions) it is natural to ask under what
regularity assumptions one can give an unambiguous meaning to their product. 
It is well-known, at least in the Euclidean case but the extension to the parabolic case is 
straightforward, that the following result yields a sharp criterion for when, in the absence of
any other structural knowledge, one can multiply a function and distribution of prescribed 
regularity \cite[Thm~2.52]{BookChemin}.

\begin{theorem}\label{theo:mult}
Let $\alpha, \beta \neq 0$. Then, the map $(f, g)\mapsto f\cdot g$ defined on all pairs of continuous
functions extends to a continuous bilinear
map from $\CC^\alpha \times \CC^\beta$ to the space of all distributions if and only if $\alpha + \beta > 0$.
Furthermore, if $\alpha + \beta > 0$, the image of the multiplication operator is $\CC^{\alpha \wedge \beta}$.
\end{theorem}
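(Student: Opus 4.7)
My plan is to go through a Littlewood-Paley decomposition adapted to the parabolic scaling inherent in Definition~\ref{def:Calpha}. The first step is to set up such a dyadic decomposition $\mathrm{Id}=\sum_{k\ge -1}\Delta_k$ and verify the standard characterisation that $f\in\CC^\alpha$ if and only if $\|\Delta_k f\|_{L^\infty}\lesssim 2^{-k\alpha}$, uniformly in $k\ge 0$ (with the low-order polynomials $P_z$ of Definition~\ref{def:Calpha} corresponding to the finitely many low-frequency blocks when $\alpha>0$). This is routine: the bound $\|\Delta_k f\|_{L^\infty}\lesssim 2^{-k\alpha}$ follows by testing $f$ against $\varphi_z^{\lambda}$ with $\lambda\sim 2^{-k}$, and the converse by reconstructing $f$ from its blocks and integrating against $\varphi_z^\lambda$.

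Next I would write the formal product using Bony's decomposition $f\cdot g = T_f g + T_g f + R(f,g)$, with paraproducts $T_f g = \sum_{k\le \ell-2}\Delta_k f\cdot \Delta_\ell g$ (and $T_g f$ symmetric) and resonant term $R(f,g)=\sum_{|k-\ell|\le 1}\Delta_k f\cdot \Delta_\ell g$. The $\ell$-th summand $S_{\ell-2}f\cdot\Delta_\ell g$ of $T_f g$ is frequency-supported near parabolic scale $2^\ell$ with $L^\infty$-norm $\lesssim 2^{-\ell\beta}\cdot 2^{-\ell(\alpha\wedge 0)}$, giving $T_f g\in\CC^{\beta+(\alpha\wedge 0)}$ with no sign restriction on $\alpha+\beta$; likewise for $T_g f$. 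The subtle piece is the resonant term: the summand $\Delta_k f\cdot\Delta_\ell g$ with $|k-\ell|\le 1$ still has $L^\infty$-norm $\lesssim 2^{-k(\alpha+\beta)}$, but its frequency support is now the whole ball of radius $O(2^k)$ rather than an annulus, so contributions from different $k$ can pile up on low-frequency modes. One then verifies directly that, when $\alpha+\beta>0$, the series converges absolutely in $L^\infty$ to an element of $\CC^{\alpha+\beta}$, by testing against $\varphi_z^\lambda$ and splitting the sum according to whether $2^{-k}\ge\lambda$ or $2^{-k}<\lambda$. Since $\beta+(\alpha\wedge 0)\ge\alpha\wedge\beta$ (and symmetrically for $T_g f$) and $\alpha+\beta\ge\alpha\wedge\beta$ in the regime $\alpha+\beta>0$, the ``if'' direction follows, with the product lying in $\CC^{\alpha\wedge\beta}$ as claimed.

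For the converse, I would exhibit, for any $\alpha,\beta\ne 0$ with $\alpha+\beta\le 0$, explicit smooth sequences showing that no continuous bilinear extension exists. When $\alpha+\beta<0$, take $f_n(z)=2^{-n\alpha}\phi(z)\cos(2^n e_1\cdot z)$ and $g_n(z)=2^{-n\beta}\phi(z)\cos(2^n e_1\cdot z)$: these are uniformly bounded in $\CC^\alpha$ and $\CC^\beta$ respectively, yet their pointwise product contains the resonant piece $\tfrac12 2^{-n(\alpha+\beta)}\phi^2$, which diverges in every distribution space as $n\to\infty$, contradicting the required boundedness on bounded sets. When $\alpha+\beta=0$, one sums over scales, $f_n=\sum_{k\le n}2^{-k\alpha}\phi\cos(2^k e_1\cdot z)$ and symmetrically for $g_n$, which remain bounded in $\CC^\alpha$ and $\CC^\beta$ while the diagonal of $f_n g_n$ grows like $\tfrac{n}{2}\phi^2$; the assumption $\alpha,\beta\ne 0$ is what keeps these partial sums uniformly bounded in the H\"older norm.

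The main obstacle is the sharp estimate on the resonant term $R(f,g)$: this is the one step where the spatial contributions are not frequency-localised, so the convergence of the series depends delicately on strict positivity of $\alpha+\beta$. Everything else --- the paraproduct bounds, the Littlewood-Paley characterisation of $\CC^\alpha$, and the oscillatory counterexamples --- is more mechanical once the parabolic scaling is set up correctly.
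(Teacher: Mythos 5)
The paper does not prove Theorem~\ref{theo:mult} at all: it presents it as classical and simply cites \cite[Thm~2.52]{BookChemin} (Bahouri--Chemin--Danchin), with the parenthetical remark that the extension from the Euclidean to the parabolic scaling is straightforward. So there is no ``paper's own proof'' to diverge from, and your proposal in fact reproduces precisely the argument of the cited reference: a Littlewood--Paley characterisation of $\CC^\alpha$ adapted to the scaling, Bony's decomposition $f\cdot g = T_f g + T_g f + R(f,g)$, the unconditional paraproduct bounds $T_fg\in\CC^{\beta+(\alpha\wedge 0)}$, the resonant estimate requiring $\alpha+\beta>0$, and oscillatory counterexamples for the converse. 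The arithmetic checks out: $\beta+(\alpha\wedge 0)\ge \alpha\wedge\beta$ and $\alpha+\beta\ge\alpha\wedge\beta$ under $\alpha+\beta>0$, so the three pieces land in $\CC^{\alpha\wedge\beta}$; and your counterexamples (a single oscillatory pair for $\alpha+\beta<0$, a lacunary sum for $\alpha+\beta=0$) are the standard ones and are correct.

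Two small technical points you should tighten if you flesh this out, though they are covered in the cited reference. First, the resonant estimate is not just an $L^\infty$ bound on each summand: because $\Delta_k f\cdot\Delta_\ell g$ with $|k-\ell|\le 1$ has Fourier support in a \emph{ball} of radius $O(2^k)$ rather than an annulus, you need the lemma that a sum $\sum_k u_k$ with $\|u_k\|_{L^\infty}\lesssim 2^{-k\gamma}$, $\gamma>0$, and $\mathrm{supp}\,\widehat{u_k}\subset B(0,C2^k)$ belongs to $B^\gamma_{\infty,\infty}$; this is exactly where $\gamma=\alpha+\beta>0$ is used, and one needs more than absolute convergence in $L^\infty$ to conclude membership in $\CC^{\alpha+\beta}$. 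Second, the Littlewood--Paley characterisation gives the Zygmund scale rather than the H\"older scale of Definition~\ref{def:Calpha} at positive integer exponents; this is consistent with the paper's own subsequent remark (after Theorem~\ref{theo:schauder}) that the two scales coincide precisely for $\alpha\in\R\setminus\N$. Since the statement excludes $\alpha,\beta=0$ but not positive integers, the cleanest formulation is really on the Zygmund scale, which is what the cited reference uses and what your approach naturally produces.
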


Another important property of these spaces if given by how they transform under convolution
with singular kernels. Let $K \colon \R^{d+1}$ be a function that is smooth away from the origin
and supported in the centred ball of radius $1$. One should think of $K$ as being a truncation of
the heat kernel $\CG$ in the sense that $\CG = K + R$ where $R$ is a smooth space-time function.
We then say that $K$ is of order $\beta$ (in the case of a truncation of the heat kernel one
has $\beta = 2$) if one can write $K = \sum_{n \ge 0} K_n$ for kernels $K_n$ which are
supported in the centred ball of radius $2^{-n}$ and such that
\begin{equ}[e:propKn]
\sup_z |D^k K_n(z)| \lesssim 2^{((d+2) + |k| - \beta)n}\;,
\end{equ}
for any fixed multiindex $k$, uniformly in $n$. Here, we set $|k| = 2k_0 + \sum_{i \neq 0} k_i$,
i.e.\ ``time counts double'' as before. Multiplying the heat kernel with a suitable partition of
the identity, it is straightforward to verify that this bound is indeed satisfied. It is also
satisfied with $\beta = 4$ for the Green's function of $\partial_t + \Delta^2$, provided
that the parabolic scaling is adjusted accordingly.

With these notations at hand, one has the following very general Schauder estimate,
see for example \cite{Schauder,MR1459795} for special cases.

\begin{theorem}\label{theo:schauder}
Let $\beta > 0$, let $K$ be a kernel of order $\beta$, and let $\alpha \in \R$ be such that
$\alpha + \beta \not \in \N$. Then, the convolution operator $\eta \mapsto K \star \eta$ is continuous from
$\CC^\alpha$ into $\CC^{\alpha + \beta}$. 
\end{theorem}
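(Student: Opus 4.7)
The plan is to exploit the dyadic decomposition $K=\sum_{n\geq 0}K_n$ from \eqref{e:propKn} and reduce matters to a family of scale-by-scale pointwise estimates on $K_n\star\eta$, which are then reassembled in a way dictated by the scale at which we wish to test the result.

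The key intermediate step is the pointwise bound
$|D^k(K_n\star\eta)(z)|\lesssim 2^{(|k|-\alpha-\beta)n}$
for every multi-index $k$ (in the parabolic sense). For $\alpha<0$, this follows directly from Definition~\ref{def:Calpha}: the function $y\mapsto D^kK_n(z-y)$ is supported in the parabolic ball of radius $2^{-n}$ around $z$, and its derivatives up to order $1+|\alpha|$ obey \eqref{e:propKn}, so it equals $2^{(|k|-\beta)n}$ times a function of the form $\psi_z^{2^{-n}}$ with $\psi\in\CB_\alpha$, at which point the $\CC^\alpha$ bound on $\eta$ yields the estimate. For $\alpha\geq 0$ I would first subtract the parabolic Taylor polynomial of $\eta$ at $z$: the remainder is bounded by $|y-z|^\alpha\lesssim 2^{-\alpha n}$ on the support of $K_n(z-\cdot)$, while the polynomial piece is reabsorbed into the Taylor polynomial of $K\star\eta$ constructed below.

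With this pointwise bound in hand, I would treat the two regimes of $\alpha+\beta$ separately. For $\alpha+\beta<0$, I pair $K\star\eta$ with a test function $\varphi_z^\lambda$ and write $(K_n\star\eta)(\varphi_z^\lambda)=\eta(\check K_n\star\varphi_z^\lambda)$ with $\check K_n(\cdot)=K_n(-\cdot)$. A direct computation shows that $\check K_n\star\varphi_z^\lambda$ equals $2^{-\beta n}$ times a rescaled test function in $\CB_\alpha$ at scale $\lambda\vee 2^{-n}$, so splitting the sum over $n$ at $n_0\sim\log_2(1/\lambda)$ gives
$|(K\star\eta)(\varphi_z^\lambda)|\lesssim\sum_{n<n_0}2^{-(\alpha+\beta)n}+\lambda^\alpha\sum_{n\geq n_0}2^{-\beta n}\lesssim\lambda^{\alpha+\beta},$
using $\alpha+\beta<0$ in the first sum and $\beta>0$ in the second. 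For $\alpha+\beta>0$, I define $P_z$ as the sum over $n$ of the parabolic Taylor polynomials of $K_n\star\eta$ at $z$ of order $N\eqdef\lfloor\alpha+\beta\rfloor$; the derivative bound ensures convergence since $|j|\leq N<\alpha+\beta$. Setting $r=|z'-z|$ and splitting at $n_0\sim\log_2(1/r)$, the terms with $n<n_0$ are controlled via the Taylor remainder by $r^{N+1}\cdot 2^{(N+1-\alpha-\beta)n}$, a geometric progression dominated by its $n=n_0$ term which collapses to $r^{\alpha+\beta}$; the terms with $n\geq n_0$ are bounded by estimating the function value and the Taylor polynomial of $K_n\star\eta$ separately, and again every resulting geometric sum collapses to $r^{\alpha+\beta}$, using that all exponents $|j|-\alpha-\beta$ with $|j|\leq N$ are strictly negative.

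The hypothesis $\alpha+\beta\notin\N$ enters at exactly the step where one of the exponents $|j|-\alpha-\beta$ in the geometric sums above would vanish, producing a logarithmic divergence instead of a power-law bound. The technically most delicate point is the pointwise estimate for $\alpha\geq 0$: the Taylor polynomial of $\eta$ at $z$ has to be carried along, and one must verify that its convolutions with the $K_n$ assemble consistently into the globally defined polynomial $P_z$ for $K\star\eta$, without contaminating the remainder estimate.
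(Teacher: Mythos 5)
The paper states this Schauder estimate without proof, citing \cite{Schauder,MR1459795} for special cases, so there is no internal proof to compare against; I therefore assess your argument on its own merits. What you propose is the standard Littlewood--Paley scaling proof, and the structure is correct: the exponent bookkeeping in both regimes of $\alpha+\beta$ works out exactly as you say, and your identification of the role of the hypothesis $\alpha+\beta\notin\N$ (ruling out the exponent-zero geometric sum that would produce a logarithm) is right.

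Two points should be made precise, both in the $\alpha\geq 0$ branch. First, the displayed pointwise bound $|D^k(K_n\star\eta)(z)|\lesssim 2^{(|k|-\alpha-\beta)n}$ is simply false for $\alpha\geq 0$ as written: take $\eta$ a nonconstant polynomial, which lies in $\CC^\alpha$ for every $\alpha\le\deg\eta$, and the left-hand side at $k=0$ already picks up a term of size $|z|\cdot 2^{-\beta n}$ rather than $2^{-(\alpha+\beta)n}$. You clearly intend the bound for $K_n\star(\eta-P^\eta_z)$, with $P^\eta_z$ the parabolic Taylor polynomial from Definition~\ref{def:Calpha}, but the display as stated could mislead. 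Second, the ``consistency'' worry you raise at the end has a clean resolution that is worth stating rather than flagging: since $K$ is compactly supported, $(K\star P^\eta_z)(z')=\int K(w)\,P^\eta_z(z'-w)\,dw$ is itself a polynomial in $z'$ of parabolic degree at most $\lfloor\alpha\rfloor\le N$, so it is absorbed wholesale into the candidate polynomial for $K\star\eta$ at $z$. One then takes
\begin{equ}
P_z \;=\; K\star P^\eta_z \;+\; \sum_{n\ge 0}\,\text{(Taylor polynomial of order $N$ of $K_n\star(\eta-P^\eta_z)$ at $z$)}\;,
\end{equ}
and the reassembly estimates you wrote down then go through verbatim with $\eta$ replaced by $\eta-P^\eta_z$ everywhere. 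With those two clarifications the proof is complete; continuity follows since the implicit constants depend only on the $\CC^\alpha$-seminorm of $\eta$ and on the kernel bounds \eqref{e:propKn}.
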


\begin{remark}
The condition $\alpha + \beta \not \in \N$ seems somewhat artificial. It can be dispensed with 
by slightly changing the definition of $\CC^\alpha$: define $\tilde \CC^\alpha$ for $\alpha \in \R$
by copying Definition~\ref{def:Calpha},
but with $\CB_\alpha$ replaced by $\tilde \CB_\alpha$. This consists of those test functions
$\varphi \in \CB_\alpha$ which furthermore satisfy $\int \varphi(z)\,P(z)\,dz = 0$ for every
polynomial $P$ of parabolic degree at most $\alpha$. (For $\alpha < 0$, one then obtains 
$\tilde \CC^\alpha = \CC^\alpha$ since this condition is empty.)
The spaces $\tilde \CC^\alpha$ actually coincide with the spaces 
$\CC^\alpha$ defined above for $\alpha \in \R \setminus \N$, but
they yield different spaces (the Zygmund spaces) for integer values of $\alpha$.
It turns out that Theorem~\ref{theo:schauder} holds without any restriction on $\alpha$ if
one replaces $\CC^\alpha$ and $\CC^{\alpha+\beta}$ by $\tilde \CC^\alpha$ and 
$\tilde \CC^{\alpha+\beta}$ in the statement.
\end{remark}

\subsection{Probabilistic properties}
\label{sec:prob}

Let now $\eta$ be a random distribution, which we define in general as a continuous linear map $\varphi \mapsto \eta(\varphi)$ from 
the space of compactly supported smooth test functions into the space of square integrable random
variables on some fixed probability space $(\Omega, \P)$. We say that it satisfies \textit{equivalence
of moments} if, for every $p \ge 1$ there exists a constant $C_p$ such that the bound
\begin{equ}
\E |\eta(\varphi)|^{2p} \le C_p \bigl(\E |\eta(\varphi)|^{2}\bigr)^p\;,
\end{equ} 
holds for uniformly over all test functions $\varphi$. This is of course the case if the random variables
$\eta(\varphi)$ are Gaussian, but it also holds if they take values in an inhomogeneous
Wiener chaos of fixed order \cite{Nualart}. 

Given a stationary random distribution $\eta$ and a (deterministic) distribution $C$, 
we say that $\eta$ has covariance $C$ if one has the identity 
$\E \eta(\varphi)\eta(\psi) = \scal{C \star \varphi,\psi}$, where $\scal{\cdot,\cdot}$ denotes the
$L^2$-scalar product. More informally, this states that 
\begin{equ}
\E \eta(\varphi)\eta(\psi) = \int \varphi(z)\psi(z') C(z-z')\,dz\,dz'\;.
\end{equ}
With these notations at hand, space-time white noise is the Gaussian random distribution
on $\R^{d+1}$ with covariance given by the delta distribution. (There is of course also a natural
periodic version of space-time white noise.) In other words, space-time white noise $\xi$ is a random 
distribution such that $\xi(\varphi)$ is centred Gaussian for every test function $\varphi$ and such that
furthermore $\E \xi(\varphi)\xi(\psi) = \scal{\varphi,\psi}_{L^2}$.

Similarly to the case of stochastic processes, a random distribution $\tilde \eta$ is said to be
a \textit{version} of $\eta$ if, for every fixed test function $\varphi$, the identity $\tilde \eta(\varphi) = \eta(\varphi)$ holds almost surely. One then has the following Kolmogorov-type criterion, a proof
of which can be found for example in \cite{Regularity}.

\begin{theorem}\label{theo:scaling}
Let $\eta$ be a stationary random distribution satisfying equivalence of moments and such that,
for some $\alpha < 0$, the bound
\begin{equ}
\E |\eta(\varphi_z^\lambda)|^2 \lesssim \lambda^{2\alpha}\;,
\end{equ}
holds uniformly over $\lambda \in (0,1]$ and $\varphi \in \CB_\alpha$. Then, for any $\kappa > 0$, 
there exists a $\CC^{\alpha - \kappa}$-valued random variable $\tilde \eta$ which is a version of $\eta$.
\end{theorem}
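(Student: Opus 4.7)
The plan is a Kolmogorov-type argument built on a compactly supported orthonormal wavelet basis of $L^2(\R^{d+1})$ adapted to the parabolic scaling. I would fix a father wavelet together with finitely many mother wavelets $\{\psi^{(i)}\}$ that are smooth, compactly supported, and have sufficiently many vanishing moments. Denote by $\psi^{(i)}_{n,k}$ their $L^2$-normalised parabolic dilates at scale $2^{-n}$ centred at dyadic points $z_{n,k}$, and by $\tilde\psi^{(i)}_{n,k} = 2^{n(d+2)/2}\psi^{(i)}_{n,k}$ the corresponding $L^1$-normalised versions. Up to a fixed constant the latter are of the form $\varphi_{z_{n,k}}^{2^{-n}}$ with $\varphi \in \CB_\alpha$, so the hypothesis immediately gives
\begin{equ}
\E |\eta(\psi^{(i)}_{n,k})|^2 = 2^{-n(d+2)}\,\E|\eta(\tilde\psi^{(i)}_{n,k})|^2 \lesssim 2^{-2n\alpha - n(d+2)}\;.
\end{equ}

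Equivalence of moments upgrades this to $\E|\eta(\psi^{(i)}_{n,k})|^{2p}\lesssim 2^{-2np\alpha - np(d+2)}$ for every $p \ge 1$. Restricting attention to a fixed compact region of space-time, in which only $O(2^{n(d+2)})$ wavelets at scale $n$ are relevant, Chebyshev combined with a union bound gives
\begin{equ}
\P\Bigl(\max_{i,k} |\eta(\psi^{(i)}_{n,k})| > 2^{n(-\alpha - (d+2)/2 + \kappa/2)}\Bigr) \lesssim 2^{n(d+2) - np\kappa}\;,
\end{equ}
which is summable in $n$ provided $p$ is chosen with $p\kappa > d+2$. Borel--Cantelli then supplies an almost sure (random) threshold beyond which the bound $|\eta(\psi^{(i)}_{n,k})| \lesssim 2^{n(-\alpha - (d+2)/2 + \kappa/2)}$ holds uniformly in $i$ and in the relevant $k$.

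I would then define
\begin{equ}
\tilde\eta \eqdef \sum_{n,k,i} \eta(\psi^{(i)}_{n,k})\,\psi^{(i)}_{n,k}\;,
\end{equ}
together with the (finite) father-wavelet contribution at the coarsest scale. To verify $\tilde\eta \in \CC^{\alpha - \kappa}$ I would test against $\varphi_z^\lambda$ with $\varphi \in \CB_{\alpha-\kappa}$ and split the double sum at $2^{-n} \sim \lambda$. On the coarse side only $O(1)$ wavelets per scale meet the support of $\varphi_z^\lambda$, and the pairing is controlled by $\|\varphi_z^\lambda\|_{L^1}\|\psi^{(i)}_{n,k}\|_{L^\infty}\lesssim 2^{n(d+2)/2}$; on the fine side the vanishing moments of $\psi^{(i)}$, combined with $\|D^r \varphi_z^\lambda\|_{L^\infty} \lesssim \lambda^{-(d+2)-r}$, yield a gain of $(\lambda 2^n)^{-r}$ that beats the growth $(\lambda 2^n)^{d+2}$ in the number of overlapping wavelets. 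Plugging in the almost sure coefficient bound, both geometric series are dominated by their endpoint at $n \sim \log_2(1/\lambda)$ and collapse to $O(\lambda^{\alpha - \kappa/2})$, which lies comfortably inside $\CC^{\alpha - \kappa}$. That $\tilde\eta$ is a version of $\eta$ would follow by a second-moment computation on the partial sums of the wavelet expansion of $\eta(\varphi)$, using the same bounds to identify the $L^2(\Omega)$-limit with $\tilde\eta(\varphi)$ for each fixed test function $\varphi$.

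The main obstacle is the analytic bookkeeping in this last step: the coarse and fine sums must exactly absorb the entropy factor $2^{n(d+2)}$ coming from the number of wavelets at each scale, and only a careful matching of the $L^p$ normalisations against the number of vanishing moments $r$ produces the correct exponent. The loss $\kappa > 0$ is intrinsic to the method, since Borel--Cantelli forces $p\kappa > d+2$ at the probabilistic step, precluding $\kappa = 0$.
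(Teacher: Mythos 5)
The paper does not give an in-line proof of this theorem; it defers to \cite{Regularity}, where the argument is exactly the one you describe: expand $\eta$ in a compactly supported wavelet basis adapted to the parabolic scaling, use the second-moment hypothesis together with equivalence of moments and a Chebyshev/Borel--Cantelli argument to obtain the almost-sure coefficient bounds $|\eta(\psi^{(i)}_{n,k})|\lesssim 2^{n(-\alpha-(d+2)/2+\kappa/2)}$ on any fixed compact, and then read off membership in $\CC^{\alpha-\kappa}$ from a wavelet characterisation of these H\"older spaces. Your bookkeeping — the $L^1$-to-$L^2$ normalisation factor $2^{-n(d+2)}$, the entropy factor $2^{n(d+2)}$, the requirement $p\kappa>d+2$ (harmless since equivalence of moments allows $p$ arbitrarily large), and the dominant scale $2^{-n}\sim\lambda$ — all matches the cited proof, so the proposal is essentially identical to the paper's reference.
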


From now on, we will make the usual abuse of terminology and not distinguish between different 
versions of a random distribution.

\begin{remark}\label{rem:STWN}
It follows immediately from the scaling properties of the $L^2$ norm that
one can realise space-time white noise as a random variable in $\CC^{-{d\over 2} - 1 - \kappa}$
for every $\kappa > 0$. This is sharp in the sense that it can \textit{not} be realised
as a random variable in $\CC^{-{d\over 2} - 1}$. This is akin to the fact that Brownian motion
has sample paths belonging to $\CC^\alpha$ for every $\alpha < {1\over 2}$, but \textit{not} for $\alpha = {1\over 2}$.
\end{remark}

Let now $K$ be a kernel of order $\beta$ as before, let $\xi$ be space-time white noise,
and set $\eta = K \star \xi$. It then follows from either Theorem~\ref{theo:scaling} directly,
or from Theorem~\ref{theo:schauder} combined with Remark~\ref{rem:STWN}, that 
$\eta$ belongs almost surely to $\CC^\alpha$ for every $\alpha < \beta -{d\over 2} - 1$.
We now turn to the question of how to define powers of $\eta$. If $\beta \le {d\over 2} + 1$,
$\eta$ is not a random function, so that its powers are in general undefined.

Recall that if $\xi$ is space-time white noise and $L^2(\xi)$ denotes the space of square-integrable
random variables
that are measurable with respect to the $\sigma$-algebra generated by $\xi$, then $L^2(\xi)$
can be decomposed into a direct sum $L^2(\xi) = \bigoplus_{m \ge 0} \CH^m(\xi)$
so that $\CH^0$ contains constants, 
$\CH^1$ contains random variables of the form $\xi(\varphi)$ with $\varphi \in L^2$, and
$\CH^m$ contains suitable generalised Hermite polynomials of order $m$ in the elements of $\CH^1$,
see \cite{Malliavin,Nualart} for details. Elements of $\CH^m$ have a representation
by square-integrable kernels of $m$ variables, and this representation is unique if
we impose that the kernel is symmetric under permutation of its arguments.
In other words, one has a surjection $I^{(m)} \colon L^2(\R^{d+1})^{\otimes m} \to \CH^m$
and $I^{(m)}(L) = I^{(m)}(L')$ if and only if the symmetrisations of $L$ and $L'$ coincide. 

In the particular case where $K$ is non-singular, $\eta$ is a random function and 
its $n$th power $\eta^n$ can be represented as
\begin{equs}
\eta^n(z) &= \sum_{2m < n} P_{m,n} C^m\, I^{(n-2m)}(K_z^{(n-2m)})\;,\\
K_z^{(r)}(z_1,\ldots,z_r) &\eqdef K(z-z_1)\cdots K(z-z_r)\;,
\end{equs}
for some combinatorial factors $P_{m,n}$,
where we have set $C = \int K^2(z)\,dz$. Let now $K_\eps$ denotes some smooth approximation to
a kernel $K$ that is not square integrable. There are then two sources of divergencies in this 
representation as $\eps \to 0$. First, we see that the sequence of constants $C_\eps = \int K_\eps^2(z)\,dz$
will necessarily diverge as $\eps \to 0$. Second, we see that the kernels $K_{\eps;z}^{(r)}$ defined
as above do not converge to a square-integrable limit as $\eps \to 0$.

While the first problem is serious, the second problem is a spurious consequence of the 
fact that we insist on evaluating $\eta$ at a fixed space-time point $z$. If instead we consider
$\eta^n(\varphi)$ for a smooth compactly supported test function $\varphi$, the above representation 
becomes
\begin{equ}
\eta^n(\varphi) = \sum_{2m < n} P_{m,n} C^m\, I^{(n-2m)}(K_\varphi^{(n-2m)})\;,
\end{equ}
where
\begin{equ}
K_\varphi^{(r)}(z_1,\ldots,z_r) \eqdef \int K(z-z_1)\cdots K(z-z_r) \,\varphi(z)\,dz\;.
\end{equ}
A simple calculation then shows that

\begin{proposition}\label{prop:Wick}
If $K$ is compactly supported, then $K_\varphi^{(n)}$ is square integrable if the function
$(K \star \hat K)^n$, where $\hat K(z) = K(-z)$, is integrable.
\end{proposition}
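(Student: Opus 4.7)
The plan is to compute $\|K_\varphi^{(n)}\|_{L^2}^2$ directly by expanding the square and applying Fubini. Writing
\begin{equ}
\|K_\varphi^{(n)}\|_{L^2}^2 = \int dz_1 \cdots dz_n \int\int \varphi(z)\varphi(z') \prod_{i=1}^n K(z-z_i)K(z'-z_i)\, dz\, dz'\;,
\end{equ}
the first step is to interchange the order of integration so that for each $i$ the integration over $z_i$ can be performed independently. This requires a Fubini/Tonelli justification, which I would handle by first replacing $K$ by $|K|$ (and similarly for $\varphi$) to verify absolute integrability, using that $K$ is compactly supported and $\varphi$ is smooth of compact support, so all supports involved are bounded.

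Once the swap is justified, the key computation is that
\begin{equ}
\int K(z-z_i)K(z'-z_i)\, dz_i = (K \star \hat K)(z'-z)\;,
\end{equ}
which follows immediately from the change of variable $w = z - z_i$ together with the definition $\hat K(w) = K(-w)$. Performing this computation for each of the $n$ factors yields the clean identity
\begin{equ}
\|K_\varphi^{(n)}\|_{L^2}^2 = \int\int \varphi(z)\,\varphi(z')\,\bigl[(K\star \hat K)(z'-z)\bigr]^n\, dz\, dz'\;.
\end{equ}

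The last step is to change variables to $u = z'-z$, which gives
\begin{equ}
\|K_\varphi^{(n)}\|_{L^2}^2 = \int \bigl[(K\star\hat K)(u)\bigr]^n\,\Bigl(\int \varphi(z)\varphi(z+u)\, dz\Bigr) du\;.
\end{equ}
The inner integral is bounded uniformly in $u$ by $\|\varphi\|_\infty^2\,|\mathop{\rm supp}\varphi|$, and moreover $K\star\hat K$ is compactly supported since $K$ is, so the outer integral is finite provided $(K\star\hat K)^n$ is integrable, as assumed.

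The only real obstacle is conceptual bookkeeping: verifying the Fubini swap cleanly and keeping the sign/conjugation conventions in $\hat K$ consistent. The compact support of $K$ makes this essentially automatic, and the rest of the argument is a routine manipulation of Gaussian kernels --- which is why the statement in the paper is preceded by the phrase ``a simple calculation then shows that''.
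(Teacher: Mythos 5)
Your computation is exactly the ``simple calculation'' the paper alludes to (the paper gives no explicit proof, just the phrase preceding the statement), and the three steps---expand the square, integrate out each $z_i$ to produce $(K\star\hat K)(z'-z)$, then change variables---are correct. One small caveat on the Fubini step: since $K$ is singular at the origin, compact support alone does not give absolute integrability; running Tonelli with $|K|$ in place of $K$ really uses that $(|K|\star|\hat K|)^n$ is integrable, which is a priori slightly stronger than ``$(K\star\hat K)^n$ integrable''---but in the regime the paper cares about (kernels satisfying \eqref{e:propKn}) the two conditions coincide up to constants, so this matches the paper's own level of informality.
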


We now \textit{define} the $n$th Wick power $\eta^{\diamond n}$ of $\eta$ as the random
distribution given by
\begin{equ}
\eta^{\diamond n}(\varphi) = I^{(n)}(K_\varphi^{(n)})\;,
\end{equ}
which, by Proposition~\ref{prop:Wick}, 
makes sense as soon as $K \star \hat K \in L^n(\R^{d+1})$. One then has the following result,
a version of which can be found for example in \cite{Dobrushin}.

\begin{proposition}\label{prop:Wick}
Let $K$ be a compactly supported kernel of order $\beta \in ({d+2\over 2}(1-{1\over n}), {d+2\over 2})$ and let
$\eta = K \star \xi$ as above. Then, $\eta^{\diamond n}$ is well-defined and belongs
almost surely to $\CC^\alpha$ for every $\alpha < (2\beta - d - 2){n\over 2}$. 
\end{proposition}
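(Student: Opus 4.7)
The plan is to deduce this from the Kolmogorov-type scaling criterion of Theorem~\ref{theo:scaling} applied to $\eta^{\diamond n}$. Since $\eta^{\diamond n}(\varphi) = I^{(n)}(K_\varphi^{(n)})$ lies in the fixed Wiener chaos $\CH^n$ for every test function $\varphi$, hypercontractivity gives equivalence of all moments for free, and $\eta^{\diamond n}$ is stationary because $\xi$ is stationary and $K$ is translation invariant. So the task reduces to (i) checking well-definedness via the earlier Proposition~\ref{prop:Wick}, and (ii) establishing the scaling bound $\E|\eta^{\diamond n}(\varphi_z^\lambda)|^2 \lesssim \lambda^{n(2\beta-d-2)}$ uniformly in $\varphi \in \CB_\alpha$ and $\lambda \in (0,1]$.

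For (i), I would first use the multiscale decomposition $K = \sum_{k \ge 0} K_k$ together with \eqref{e:propKn} to derive the pointwise estimate $|K(z)| \lesssim |z|^{\beta - d - 2}$: at $|z| \approx 2^{-N}$, only the scales $k \le N$ contribute, and the resulting geometric sum $\sum_{k \le N} 2^{(d+2-\beta)k}$ is dominated by its last term since $\beta < d+2$. A standard Riesz-type convolution estimate, valid precisely when $\beta < (d+2)/2$ so that $2(d+2-\beta) > d+2$, then yields $|(K \star \hat K)(z)| \lesssim |z|^{2\beta - d - 2}$ near the origin, while $K \star \hat K$ is bounded and compactly supported away from it. Since the parabolic volume element scales like $r^{d+1}\,dr$ in $r = |z|$, raising to the $n$th power produces an integrable singularity iff $n(2\beta-d-2) + (d+2) > 0$, i.e.\ $\beta > \frac{d+2}{2}\bigl(1 - \tfrac{1}{n}\bigr)$, which is exactly the lower bound in the hypothesis.

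For (ii), since $K_\varphi^{(n)}$ is already symmetric in its arguments, the It\^o--Wiener isometry gives
\[
\E|\eta^{\diamond n}(\varphi_z^\lambda)|^2 = n!\,\|K_{\varphi_z^\lambda}^{(n)}\|_{L^2}^2 = n!\int \varphi_z^\lambda(u)\,\varphi_z^\lambda(v)\,(K \star \hat K)^n(u-v)\,du\,dv\,,
\]
the second equality being a Fubini exchange in which the $n$-fold integration over the kernel variables factorises as $(K \star \hat K)^n(u-v)$. Rescaling $u = z + (\lambda^2 s', \lambda y')$ and similarly for $v$, the Jacobian $\lambda^{2(d+2)}$ cancels the $\lambda^{-2(d+2)}$ produced by the two factors of $\varphi_z^\lambda$, while inserting the pointwise bound on $K \star \hat K$ extracts a factor $\lambda^{n(2\beta-d-2)}$; what remains is a $\lambda$-independent finite integral, finite again by the lower bound on $\beta$ together with the compact support of $\varphi$.

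Applying Theorem~\ref{theo:scaling} with $2\alpha_0 = n(2\beta - d - 2)$ then delivers $\eta^{\diamond n} \in \CC^{\alpha_0 - \kappa}$ almost surely for every $\kappa > 0$, as required. The main technical point is really the pointwise analysis in step (i): the two endpoints of the allowed range for $\beta$ correspond precisely to the two marginal cases, $\beta = (d+2)/2$ where the Riesz convolution is barely divergent and $\beta = \frac{d+2}{2}(1 - \frac{1}{n})$ where $(K \star \hat K)^n$ barely fails to be integrable, so the bookkeeping must be done tightly; everything downstream of the kernel estimate is routine scaling.
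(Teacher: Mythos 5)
Your proposal follows the same route as the paper's proof: establish the pointwise estimate $|(K\star\hat K)(z)|^n\lesssim |z|^{(2\beta-d-2)n}$, deduce $\|K^{(n)}_{\varphi_z^\lambda}\|_{L^2}^2\lesssim\lambda^{(2\beta-d-2)n}$, and conclude via Theorem~\ref{theo:scaling} using equivalence of moments in a fixed Wiener chaos. The paper dismisses the kernel estimate as ``a simple calculation''; you have simply supplied the multiscale/Riesz bookkeeping behind it, including the correct identification of the two endpoints of the $\beta$-interval with the two marginal failures, and the argument is sound.
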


\begin{proof}
A simple calculation shows that 
\begin{equ}
\bigl| \bigl(K \star \hat K\bigr)(z)\bigr|^n \lesssim |z|^{(2\beta - d - 2)n}\;,
\end{equ}
so that $\|K_{\varphi_z^\lambda}^{(n)}\|_{L^2}^2 \lesssim \lambda^{(2\beta - d - 2)n}$. The
claim then follows from Theorem~\ref{theo:scaling}, noting that random variables belonging to
a Wiener-It\^o chaos of finite order satisfy the equivalence of moments. 
\end{proof}

It is important to note that this result is stable: replacing $K$ by a smoothened
kernel $K_\eps$ and letting $\eps \to 0$ yields convergence in probability of
$\eta_\eps^{\diamond n}$ to $\eta^{\diamond n}$ in $\CC^\alpha$ (with $\alpha$
as in the statement of the proposition) for most ``reasonable'' choices of $K_\eps$.
Furthermore, for fixed $\eps > 0$, one has an explicit formula relating $\eta_\eps^{\diamond n}$
to $\eta_\eps$:
\begin{equ}[e:propetaeps]
\eta_\eps^{\diamond n}(z) = H_n(\eta_\eps(z), C_\eps)\;,
\end{equ}
where the rescaled Hermite polynomials $H_n(\cdot,C)$ are related to the standard Hermite
polynomials by $H_n(u,C) = C^{n/2} H_n(C^{-1/2} u)$ and we have set $C_\eps = \int K_\eps^2(z)\,dz$.

\section{General methodology}
\label{sec:General}

The general methodology for providing a robust meaning to equations
of the type presented in the introduction is as follows. We remark that the main reason why
these equations seem to be ill-posed is that there is no canonical way of multiplying
arbitrary distributions. The distributions appearing in our setting are however
not arbitrary. For instance, one would expect solutions to semilinear equations of 
this type to locally ``look like'' the solutions to the corresponding 
linear problems. This is because, unlike hyperbolic or dispersive equations, 
parabolic (or elliptic) equations to not transport singularities.
This gives hope that if one could somehow make sense of the 
nonlinearity, when applied to the solution to the linearised equation
(which is a Gaussian process and therefore amenable to explicit calculations), then
one could maybe give meaning to the equations themselves.

\subsection{The Da Prato-Debussche trick}
\label{sec:DPD}

In some situations, one can apply this idea directly, and this was originally 
exploited in the series of articles \cite{MR1941997,MR2016604,MR2365646}. Let us focus on the example of 
the dynamical $\Phi^4$ model in dimension $2$, which is formally given by
\begin{equ}
\partial_t \Phi = \Delta \Phi +  C \Phi - \Phi^3 + \xi\;,
\end{equ}
where $\xi$ is (spatially periodic) space-time white noise in space dimension $2$.

Let now $\xi_\eps$ denote a smoothened version of $\xi$ given for example
by $\xi_\eps = \rho_\eps \star \xi$, where $\rho_\eps(t,x) = \eps^{-4} \rho(\eps^{-2}t, \eps^{-1}x)$,
for some smooth compactly supported space-time mollifier $\rho$. In this case, denoting again
by $K$ a cut-off version of the heat kernel and noting that $K$ is of order $2$ 
(and therefore also of every order less than $2$), it is immediate that $\eta = K \star \xi$ satisfies
the assumptions of Proposition~\ref{prop:Wick} for every integer $n$.

In view of \eqref{e:propetaeps}, this suggests that it might be possible to show that 
the solutions to
\begin{equs}[e:Phi42reg]
\partial_t \Phi_\eps &= \Delta \Phi_\eps + 3C_\eps \Phi_\eps - \Phi_\eps^3 + \xi_\eps \\
&= \Delta \Phi_\eps - H_3(\Phi_\eps, C_\eps) + \xi_\eps\;,
\end{equs}
with $C_\eps = \int K_\eps^2(z)\,dz$ as above, where $K_\eps = \rho_\eps \star K$, converge to a distributional
limit as $\eps \to 0$. This is indeed the case, and the argument goes as follows. 
Writing $\eta_\eps = K_\eps \star \xi$ and $v_\eps = \Phi_\eps - \eta_\eps$ with $\Phi_\eps$ the solution
to \eqref{e:Phi42reg}, we deduce that $v_\eps$ solves the equation
\begin{equ}
\partial_t v_\eps = \Delta v_\eps - H_3(\eta_\eps + v_\eps, C_\eps) + R_\eps\;,
\end{equ}
for some smooth function $R_\eps$ that converges to a smooth limit $R$ as $\eps \to 0$.
We then use elementary properties of Hermite polynomials to rewrite this as
\begin{equs}
\partial_t v_\eps &= \Delta v_\eps - \bigl(H_3(\eta_\eps, C_\eps) + 3 v_\eps H_2(\eta_\eps, C_\eps) + 3 v_\eps^2 \,\eta_\eps + v_\eps^3\bigr) + R_\eps \\
&= \Delta v_\eps - \bigl(\eta_\eps^{\diamond 3} + 3 v_\eps \eta_\eps^{\diamond 2} + 3 v_\eps^2 \,\eta_\eps + v_\eps^3\bigr) + R_\eps\;.
\end{equs}
By Proposition~\ref{prop:Wick} (and the remarks that follow), we see that $\eta_\eps^{\diamond n}$ 
converges in probability to a limit $\eta^{\diamond n}$ in every space $\CC^\alpha$ for $\alpha < 0$.
We can then \textit{define} a random distribution $\Phi$ by $\Phi = \eta + v$, where $v$ is the solution to
\begin{equ}[e:v]
\partial_t v = \Delta v - \bigl(\eta^{\diamond 3} + 3 v \eta^{\diamond 2} + 3 v^2 \,\eta + v^3\bigr) + R\;.
\end{equ}
As a consequence of Theorem~\ref{theo:schauder} (combined with additional estimates showing that the $\CC^\gamma$-norm
of $K \star (f \one_{t > 0})$ is small over short times provided that $f \in \CC^{\alpha}$ for $\alpha \in (-2,0)$ 
and $\gamma < \alpha + \beta$), it is relatively easy to show that \eqref{e:v} has local solutions, and that these
solutions are robust with respect to approximations of $\eta^{\diamond n}$ in $\CC^\alpha$ for $\alpha$ sufficiently close
to $0$. In particular, this shows that one has $\Phi_\eps \to \Phi$ in probability, at least locally in time for short times.

\begin{remark}
The dynamical $\Phi^4$ model in dimension $2$ was previously 
constructed in \cite{AlbRock91} (see also the earlier work \cite{MR815192} where a related
but different process was constructed), but that construction relied heavily
on \textit{a priori} knowledge about its invariant measure and it was not clear how robust the construction
was with respect to perturbations. For example, the construction we just sketched gives solutions for 
every (and not just ``almost every'') initial condition in 
$\CC^\eta$ for not too small $\eta$. (It turns out that $\eta > -2/3$ is sufficient and that this is sharp.)
\end{remark}

\begin{remark}
The regularisation \eqref{e:Phi42reg} is not always the most natural one because there is no closed-form expression
for the invariant measure of \eqref{e:Phi42reg} in general. In this sense, a ``better'' regularisation would be
given for example by solutions to $\partial_t \Phi_\eps = (\Delta - \eps^2 \Delta^2) \Phi_\eps + 3C_\eps \Phi_\eps - \Phi_\eps^3 + \xi$. It is equally straightforward to show that this also converges to the same limiting process.
\end{remark}

\subsection{Breakdown of the argument and a strategy to rescue it}

While the argument outlined above works very well for a number of equations, it unfortunately breaks down
for the equations mentioned in the introduction. Indeed, consider again \eqref{e:AC}, but this 
time in space dimension $d = 3$. In this case, one has $\eta \in \CC^{-{1\over 2}-\kappa}$ for every $\kappa > 0$
and, by Proposition~\ref{prop:Wick}, one can still make sense of $\eta^{\diamond n}$ for $n < 5$. 
One could therefore hope to define again a solution $\Phi$ by setting $\Phi = \eta + v$ with $v$ the solution to \eqref{e:v}.
Unfortunately, this is doomed to failure: since $\eta^{\diamond 3} \in \CC^{-{3\over 2} - \kappa}$ (but no better),
one can at best hope to have $v \in \CC^{{1\over 2}-\kappa}$. As a consequence, both products 
$v \cdot \eta^{\diamond 2}$ and $v^2 \cdot \eta$ fall outside of the scope of Theorem~\ref{theo:mult}
and we cannot make sense of \eqref{e:v}.

One might hope at this stage that the Da Prato-Debussche trick could be iterated to improve things: identify the
``worst'' term in the right hand side of \eqref{e:v}, make sense of it ``by hand'', and try to obtain a well-posed
equation for the remainder. While this strategy can indeed be fruitful and allows us to deal with slightly more singular
problems, it turns out to fail in this situation. Indeed, no matter how many times we iterate this trick, the 
right hand side of the equation for the remainder $v$ will \textit{always} contain a term proportional to 
$v \cdot \eta^{\diamond 2}$. As a consequence, one can \textit{never} hope to obtain a remainder of regularity better
than $\CC^{1-\kappa}$ which, since $\eta^{\diamond 2} \in \CC^{-1-\kappa}$, shows that it is not possible to obtain
a well-posed equation by this method. See also Remark~\ref{rem:appl} below for a more systematic explanation of when 
this trick fails.

In some cases, one does not even know how to get started: consider the class of ``classical'' one-dimensional
stochastic PDEs given by
\begin{equ}[e:SPDE]
\partial_t u = \partial_x^2 u + f(u) + g(u)\xi\;,
\end{equ}
where $\xi$ denotes space-time white noise, $f$ and $g$ are fixed smooth functions from $\R$ to $\R$,
and the spatial variable $x$ takes values on the circle. Then, we know in principle how to use It\^o calculus to make
sense of \eqref{e:SPDE} by rewriting it as an integral equation and interpreting the integral against $\xi$
as an It\^o integral, see \cite{DPZ}. However, this notion of solution is not very robust under approximations
since space-time regularisations of the driving noise $\xi$ typically destroy the probabilistic structure 
required for It\^o integration.
This is in contrast to the solution theory sketched in Section~\ref{sec:DPD}
which was very stable under approximations of the driving noise, even though it required suitable
adjustments to the equation itself. Unfortunately, the argument of Section~\ref{sec:DPD} (try to find some
function / distribution $\eta$ so that $v = u - \eta$ has better regularity properties and then obtain
a well-posed equation for $v$) appears to break down completely. 

The main idea now is that even though we may not be able to find a global object $\eta$ so that $u-\eta$
has better regularity, it might be possible to find a \textit{local} object that does the trick
at any one point. More precisely, setting again $\eta = K \star \xi$ for a truncation $K$ of the
heat kernel (this time $\eta$ is a H\"older continuous function in $\CC^{{1\over 2}-\kappa}$ for every $\kappa > 0$ by
Theorems~\ref{theo:schauder} and \ref{theo:scaling}), one would expect solutions to \eqref{e:SPDE}
to be well approximated by
\begin{equ}[e:approxu]
u(z') \approx u(z) + g(u(z)) \bigl(\eta(z') - \eta(z)\bigr)\;.
\end{equ}
Why is this the case? The intuition is that since $K$ is regular everywhere except at the origin, 
convolution with $K$ is ``almost'' a local operator, modulo more regular parts. Since, near any fixed point $z$,
we would expect $g(u)\xi$ to ``look like'' $g(u(z))\xi$ this suggests that near that point $z$, 
the function $K \star (g(u)\xi)$ should ``look like'' $g(u(z)) \eta$, which is what \eqref{e:approxu} formalises.

Note that this looks very much like a first-order Taylor expansion, but with $\eta(z') - \eta(z)$ playing 
the role of the linear part $z'-z$. If we assume that \eqref{e:approxu} yields a good approximation to $u$, then
one would also expect that
\begin{equ}
g(u(z')) \approx g(u(z)) + g'(u(z)) g(u(z)) \bigl(\eta(z') - \eta(z)\bigr)\;,
\end{equ}
so that $g(u)$ has again a ``first-order Taylor expansion'' of the same type as the one for $u$.
One could then hope that if we know somehow how to multiply $\eta$ with $\xi$,
this knowledge could be leveraged to define the product between $g(u)$ and $\xi$ in a robust way.
It turns out that this is \textit{not} quite enough for the situation considered here. However, this general strategy turns out to 
be very fruitful, provided that we also control higher-order local expansions of $u$, and this is precisely
what the theory of regularity structures formalises \cite{Regularity,HPP}. 
In particular, besides being applicable to \eqref{e:SPDE}, it also applies to  all of the equations mentioned in the introduction.

\section{Regularity structures}
\label{sec:regular}

We now describe a very general framework in which one can formulate ``Taylor expansions'' 
of the type \eqref{e:approxu}. We would like to formalise the following features of Taylor expansions.
First, the coefficients of a Taylor expansion (i.e.\ the value and derivatives of a given function
in the classical case or the coefficients $u(z)$ and $g(u(z))$ in the case \eqref{e:approxu}) correspond
to terms of different degree / homogeneity and should therefore naturally be thought of as elements in
some graded vector space. Second, an expansion around a given point can be reexpanded around a different point
at the expense of changing coefficients, like so:
\begin{equs}
a\cdot 1 + b\cdot x + c\cdot  x^2 &= \bigl(a + bh + ch^2\bigr)\cdot 1 + \bigl(b + 2ch\bigr)\cdot (x-h) +c\cdot (x-h)^2\;,\\
u\cdot 1 + g(u) \cdot \bigl(\eta(z') - \eta(z)\bigr) & = 
\bigl(u + g(u)(\eta(z'') - \eta(z))\bigr)\cdot 1 + g(u) \cdot \bigl(\eta(z') - \eta(z'')\bigr)\;.
\end{equs}
Lastly, we see from these expressions that if we order coefficients by increasing homogeneity,
then the linear transformation performing the reexpansion has 
an upper triangular structure with the identity on the diagonal. 

\subsection{Basic definitions}\label{sec:def}

The properties just discussed are reflected in the following algebraic structure.

\begin{definition}\label{def:regStruct}
A \textit{regularity structure} $\TT = (A, T, G)$ consists of the following elements:
\begin{itemize}
\item An index set $A \subset \R$ such that $0 \in A$, $A$ is bounded from below, and $A$ is locally finite.
\item A \textit{model space} $T$, which is a graded vector space $T = \bigoplus_{\alpha \in A} T_\alpha$,
with each $T_\alpha$ a Banach space; elements in $T_\alpha$ are said to have {\it homogeneity} (or {\it degree}) 
$\alpha$. Furthermore $T_0$ is one-dimensional and has a distinguished basis vector $\one$.
Given $\tau \in T$, we write $\|\tau\|_\alpha$ for the norm of its component in $T_\alpha$.
\item A \textit{structure group} $G$ of (continuous) linear operators acting on $T$ such that, for every $\Gamma \in G$, every $\alpha \in A$,
and every ${\tau_\alpha} \in T_\alpha$, one has
\begin{equ}[e:coundGroup]
\Gamma {\tau_\alpha} - {\tau_\alpha} \in T_{<\alpha} \eqdef \bigoplus_{\beta < \alpha} T_\beta\;.
\end{equ}
Furthermore, $\Gamma \one = \one$ for every $\Gamma \in G$.
\end{itemize}
\end{definition}

The prime example of a regularity structure one should keep in mind is the one associated to Taylor polynomials
on space-time $\R^{d+1}$.
In this case, the space $T$ is given by all polynomials in $d+1$ indeterminates $X_0,\ldots,X_d$, with $X_0$ representing
the ``time'' coordinate. It comes with a canonical basis given by all monomials of the type $X^k = X_0^{k_0}\cdots X_d^{k_d}$ with
$k$ an arbitrary multiindex. The basis vector $\one$ is the one corresponding to the zero multiindex.
The space $T$ has a natural grading by postulating that the homogeneity of $X^k$ is $|k| = 2k_0 + \sum_{i\neq 0} k_i$.
(But of course any other ``reasonable'' way of counting degrees would also do, including the usual way.)
In the case of the polynomial regularity structure, the structure group $G$ is simply given by $\R^{d+1}$, endowed
with addition, and acting on monomials by
\begin{equ}[e:defGamma]
\hat\Gamma_h X^k = (X-h)^k = (X_0 - h_0)^{k_0}\cdots (X_d - h_d)^{k_d}\;.
\end{equ}
One has of course the identity $\hat\Gamma_{h+h'} = \hat\Gamma_h \circ \hat\Gamma_{h'}$, and it
is immediate that all of the axioms of a regularity structure are satisfied in this case.

In the case of polynomials, there is a natural ``realisation'' of the structure $\TT$ at each space-time point $z$,
which is obtained by turning an abstract polynomial into the corresponding concrete polynomial (viewed now as a real-valued
function on $\R^{d+1}$) based at $z$. In other words, we naturally have a family of linear maps $\Pi_z\colon T \to \CC^\infty(\R^d)$ 
given by
\begin{equ}[e:defPi]
\bigl(\Pi_z X^k\bigr)(z') = (z'_0 - z_0)^{k_0}\cdots (z'_d - z_d)^{k_d}\;.
\end{equ}
It is immediate that the group $G$ transforms these maps into each other in the sense that
\begin{equ}
\Pi_z \hat \Gamma_{h} = \Pi_{z+h}\;.
\end{equ}
It is furthermore an immediate consequence of the scaling properties of monomials that the maps $\Pi_z$ and the
representation $h \mapsto \hat\Gamma_h$ of $\R^{d+1}$ are ``compatible'' with our grading for the model space $T$.
More precisely, one has
\begin{equ}
\scal{\varphi_z^\lambda, \Pi_z X^k} = \lambda^{|k|} \scal{\varphi, \Pi_0 X^k}\;,\qquad
\|\hat \Gamma_h X^k\|_\ell = C_{k,\ell} |h|^{|k|- \ell}\;,
\end{equ}
for some constants $C_{k,\ell}$ and every $\ell \le |k|$. Here, $\scal{\cdot,\cdot}$ denotes again the usual
$L^2$-scalar product.

These observations suggest the following definition of a ``model'' for $\TT$, where we impose properties similar
to the ones we just found for the polynomial model. A model always requires the specification of an ambient space,
together with a possibly inhomogeneous scaling. For definiteness, we will fix our ambient space to be $\R^{d+1}$ endowed
withe the parabolic scaling as above, but other scalings are treated in virtually the same way. We also denote by 
$\CS'$ the space of all distributions (even though we really consider distributions over compactly supported test functions,
but the letter $\CD$ is reserved for a different usage below). We also denote by $L(E,F)$ the set of all continuous
linear maps between the topological vector spaces $E$ and $F$.

\begin{definition}\label{def:model}
Given a regularity structure $\TT$, a \textit{model} for $\TT$ consists of maps 
\begin{equ}
\R^{d+1} \ni z \mapsto \Pi_z \in L(T, \CS')\;,\qquad \R^{d+1}\times \R^{d+1} \ni (z, z') \mapsto \Gamma_{zz'} \in G\;,
\end{equ}
satisfying the algebraic compatibility conditions
\begin{equ}[e:alg]
\Pi_z \Gamma_{zz'} = \Pi_{z'}\;,\qquad \Gamma_{zz'} \circ \Gamma_{z'z''} = \Gamma_{zz''}\;,
\end{equ}
as well as the analytical bounds
\begin{equ}[e:ana]
|\scal{\Pi_z \tau, \varphi_z^\lambda}| \lesssim \lambda^{\alpha}\|\tau\|\;,\qquad
\|\Gamma_{zz'} \tau\|_\beta \lesssim |z-z'|^{\alpha-\beta} \|\tau\|\;.
\end{equ}
Here, the bounds are imposed uniformly over all $\tau \in T_\alpha$, all $\beta < \alpha \in A$, and all test functions
$\varphi \in \CB_{r}$ with $r = \inf A$. They are imposed locally uniformly in $z$ and $z'$.
\end{definition}

\begin{remark}
These definitions suggest a natural topology for the space $\MM$ of all models for a given regularity structure, generated
by the following family of pseudo-metrics indexed by compact sets $K$:
\begin{equ}[e:metricModel]
\sup_{z \in K} \Bigl(\sup_{\varphi,\lambda,\alpha,\tau} \lambda^{-\alpha} |\scal{\Pi_z \tau - \bar \Pi_z \tau, \varphi_z^\lambda}|
+ \sup_{|z-z'| \le 1} \sup_{\alpha,\beta,\tau}|z-z'|^{\beta-\alpha} \|\Gamma_{zz'} \tau - \bar\Gamma_{zz'} \tau\|_\beta \Bigr)\;.
\end{equ}
Here the inner suprema run over the same sets as before, but with $\|\tau\| = 1$.
\end{remark}

\subsection{H\"older classes}

It is clear from the above discussion that if $\TT$ is the polynomial structure, $\Pi$ is defined
as in \eqref{e:defPi}, and $\Gamma_{zz'} = \hat \Gamma_{z'-z}$ with $\hat \Gamma_h$ as in \eqref{e:defGamma}, then 
$(\Pi,\Gamma)$ is a model for $\TT$ in the sense of Definition~\ref{def:model}.
Given an arbitrary regularity structure $\TT$ and an arbitrary model $(\Pi,\Gamma)$, it is now natural to define
the corresponding ``H\"older spaces'' as spaces of distributions that can locally (near any space-time point $z$) 
be approximated by $\Pi_z \tau$ for some $\tau \in T$. This would be the analogue to the statement that a smooth
function is one that can locally be approximated by a polynomial. 

There is however one major difference with the case of smooth functions. It is of course the case that if 
$f$ is smooth, then the coefficients of the Taylor expansion of $f$ at any point are uniquely determined by the
behaviour of $f$ in the vicinity of that point. This is in general \textit{not} the case anymore in the context of
the framework we just described. To appreciate this fact, consider the following example. Fix $\alpha \in (0,1)$ and
$m \in \N$, and take for $\TT$ the regularity structure where $A = \{0,\alpha\}$, $T_0 \cong \R$ with basis vector $\one$,
$T_\alpha \cong \R^m$ with basis vectors
$(e_i)_{i\le m}$, and structure group $G \cong \R^m$ acting on $T$ via
\begin{equ}
\hat \Gamma_h e_i = e_i - h_i \one\;.
\end{equ}
Let then $W$ be an $\R^m$-valued $\alpha$-H\"older continuous function defined on the ambient space
and set
\begin{equ}
\Pi_z \one = 1\;,\qquad \bigl(\Pi_z e_i\bigr)(z') = W_i(z') - W_i(z)\;,\qquad 
\Gamma_{zz'} = \hat \Gamma_{W(z) - W(z')}\;. 
\end{equ}
Again, it is straightforward to verify that this does indeed define a model for $\TT$.
In fact, setting $m=1$ and $W = \eta$, this is precisely the structure one would use to
formalise the expansion \eqref{e:approxu}.

Let now $F \colon \R^m \to \R$ be a smooth function and consider the function $f$ on the ambient space
given by $f(z) = F(W(z))$. For any $z$, we furthermore set
\begin{equ}
T \ni \hat f(z) = F(W(z))\,\one + \sum_{i=1}^m (\partial_i F)(W(z))\,e_i \;.
\end{equ}
It then follows immediately from the usual Taylor expansion of $F$ and the definition of the model $(\Pi,\Gamma)$ 
that one has the bound
\begin{equ}[e:boundHolder]
\bigl|f(z') - \bigl(\Pi_z \hat f(z)\bigr)(z')\bigr| \lesssim |z-z'|^{2\alpha}\;,
\end{equ}
so that in this context and with respect to this specific model, the function $f$ behaves as if it were
of class $\CC^{2\alpha}$ with ``Taylor series'' given by $\hat f$.
In the case where the underlying space is one-dimensional, this is precisely the insight exploited in the theory
of rough paths \cite{MR2314753,MR2036784,MR2604669} in order to develop a pathwise approach to stochastic calculus. 
More specifically, the perspective given here (i.e.\ controlling functions via analogues to Taylor expansion) 
is that of the theory of controlled  rough paths developed in \cite{Max}.

It is now very natural to ask whether, just like in the case of smooth functions, a bound of the type 
\eqref{e:boundHolder} is sufficient to uniquely specify $\hat f(z)$ for every point $z$. Unfortunately,
the answer to this question is that ``it depends''. The reason is that while \eqref{e:ana} imposes an
upper bound on the behaviour of $\Pi_z$ in the vicinity of $z$, it does \textit{not} impose
any corresponding lower bound. For example, the function $W(z) = 0$ is a perfectly admissible $\alpha$-H\"older
continuous function that we could have used to build our model. In that case, the value of the $e_i$-component in $\hat f$
is completely irrelevant for the bound \eqref{e:boundHolder}, so that uniqueness of the ``Taylor series'' fails.
Suppose on the other hand that the underlying space is one-dimensional, that $\alpha \in ({1\over 4}, {1\over 2})$, and
that $W$ is a typical sample path of a Brownian trajectory. In this case (and also 
in the more general case of fractional Brownian motion), it was shown in \cite[Thm~3.4]{Natesh} (see also \cite{Tom} for
a slightly more general statement)
that a bound of the type \eqref{e:boundHolder} is indeed sufficient to 
uniquely determine all the coefficients of $\hat f$ (at least for almost all Brownian trajectories).

\begin{remark}
The fact that $\hat f$ is uniquely determined by $f$ in the Brownian case can be interpreted
as an analogue to the fact that the Doob-Meyer decomposition of a semimartingale is unique.
Since the statement given in \cite{Natesh} is quantitative, it can be interpreted as a 
deterministic analogue to Norris's lemma, of which various incarnations can be found in
\cite{Bismut1,Bismut2,KSAMI,KSAMII,KSAMIII,Nor86SMC}. Another deterministic analogue to this
lemma was previously obtained in a more restrictive context in \cite[Thm~7.1]{ErgodicBig}.
\end{remark}

Consider now a sequence $W^\eps$ of smooth (random) functions so that $W^\eps$ converges to Brownian motion
in $\CC^\alpha$ as $\eps \to 0$. For definiteness, take for $W_\eps$ piecewise linear interpolations on
a grid of size $\eps$. Then, if we know \textit{a priori} that we have a bound of the 
type \eqref{e:boundHolder} with a proportionality constant of order $1$, this determines the coefficients
of $\hat f$ ``almost uniquely'' up to an error of order about $\eps^{2\alpha - {1\over 2}}$.

What this discussion suggests is that we should really reverse our point of view from what 
we are used to: instead of fixing a function and asking whether it has a certain H\"older regularity
by checking whether it is possible to find a ``Taylor expansion'' at each point satisfying a bound
of the type \eqref{e:boundHolder}, we should take the candidate expansion as our fundamental object and
ask under which condition it does indeed approximate one single function / distribution around
each point at the prescribed order. More precisely, in order to define suitable classes of ``regular distributions''
we should address the following question. Fix some $\gamma > 0$ (the order of our ``Taylor expansion'') and consider
a function $f \colon \R^{d+1} \to T_{<\gamma}$. Under which assumptions can we find a distribution $\zeta$
such that $\zeta$ ``looks like'' the distribution $\Pi_z f(z)$ (in a suitable sense) near every point $z$?
We claim that the ``right'' answer is given by the following definition.

\begin{definition}\label{def:Dgamma}
Given a regularity structure $\TT$ and a model $(\Pi,\Gamma)$ as above, we define $\CD^\gamma$ as
the space of functions $f \colon \R^{d+1} \to T_{<\gamma}$ such that the bound
\begin{equ}[e:defDgamma]
\|f(z) - \Gamma_{zz'}f(z')\|_\alpha \lesssim |z-z'|^{\gamma - \alpha}\;.
\end{equ}
holds for every $\alpha < \gamma$, locally uniformly in $z$ and $z'$.
\end{definition}

\begin{remark}
This definition
makes sense and is non-empty even for negative $\gamma$, as long as $\gamma > \inf A$.
\end{remark}

\begin{remark}
The notation $\CD^\gamma$ is really an abuse of notation, since even for a given regularity structure
there isn't one single space  $\CD^\gamma$, but a whole collection of them, one for each model $(\Pi,\Gamma) \in \MM$.
More formally, one should really consider the space $\MM \ltimes \CD^{\gamma}$ consisting of
pairs $((\Pi,\Gamma),f)$ such that $f$ belongs to the space $\CD^\gamma$ based on the model
$(\Pi,\Gamma)$. The space $\MM \ltimes \CD^{\gamma}$ also comes with a natural topology generated again by 
a family of pseudo-metrics indexed by compact sets $K$. These metrics are given by \eqref{e:metricModel} for the distances
between models $(\Pi,\Gamma)$ and $(\bar \Pi,\bar \Gamma)$, and by
\begin{equ}
\sup_{z, z' \in K \atop |z-z'| \le 1} \sup_{\alpha < \gamma} |z-z'|^{\alpha-\gamma} \|f(z) - \Gamma_{zz'} f(z') - {\bar f}(z) + \bar \Gamma_{zz'} {\bar f}(z')\|_{\alpha} \;,
\end{equ}
for the distance between the corresponding elements in $\CD^\gamma$.
\end{remark}

In the case where $\TT$ is the polynomial regularity structure and $(\Pi,\Gamma)$
are the usual Taylor polynomials as above, one can see that this definition coincides with the usual definition of $\CC^\gamma$
(except at integer values where $\CD^1$ describes Lipschitz continuous functions, etc).
In this case, the component $f_0(z) = \scal{\one, f(z)}$ of $f(z)$ in $T_0$ is the only reasonable 
candidate for the function represented by $f$. Furthermore, $\scal{\one,\Gamma_{zz'}f(z')}$ is 
nothing but the candidate Taylor expansion of $f$ around $z'$, evaluated at $z$. The bound \eqref{e:defDgamma}
with $\alpha = 0$ is then just a statement of the fact that $f_0$ is of class $\CC^\gamma$ and that 
$f(z)$ is its Taylor series of order $\gamma$ at $z$. The corresponding bounds for $\alpha > 0$ then follow 
immediately, since they merely state that the $\alpha$th derivative of $f_0$ is of class $\CC^{\gamma - \alpha}$.

\subsection{The reconstruction operator}

The situation is much less straightforward when the model space $T$ contains components of negative
homogeneity. In this case, the bounds \eqref{e:ana} allow the model $\Pi_z$ to consist of genuine
distributions and we do not anymore have an obvious candidate for the distribution represented
by $f$. The following result shows that such a distribution nevertheless always exists and is unique
as soon as $\gamma > 0$. This also provides an \textit{a posteriori} justification for our definition
of the spaces $\CD^\gamma$.

\begin{theorem}\label{theo:reconstruction}
Consider a regularity structure $\TT = (A,T,G)$ and fix $\gamma > r = \inf A$. Then, there exists a continuous map
$\CR \colon \MM \ltimes \CD^\gamma \to \CS'$ (the ``reconstruction map'') with the property that
\begin{equ}[e:bound]
\bigl|\bigl(\CR (\Pi,\Gamma,f) - \Pi_z f(z)\bigr)(\varphi_z^\lambda)\bigr| \lesssim \lambda^\gamma\;,
\end{equ}
uniformly over $\lambda \in (0,1]$ and $\varphi \in \CB_r$, and locally uniformly over $z \in \R^{d+1}$.
Furthermore, for any given model $(\Pi,\Gamma)$, the map $f \mapsto \CR (\Pi,\Gamma,f)$ is linear.
If $\gamma > 0$, the map $\CR$ is uniquely specified by the requirement \eqref{e:bound}.
\end{theorem}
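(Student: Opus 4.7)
The plan is to prove uniqueness first (which holds only in the regime $\gamma > 0$) and then construct $\CR$ via a multi-scale, wavelet-type approximation whose heart is a \emph{coherence estimate} controlling how $\Pi_y f(y)$ varies with the base point $y$. For uniqueness, suppose $\CR$ and $\bar\CR$ both satisfy \eqref{e:bound} for the same $(\Pi,\Gamma,f)$. Then $\eta := \CR f - \bar\CR f$ is a distribution with $|\eta(\varphi_z^\lambda)| \lesssim \lambda^\gamma$ uniformly over $\varphi \in \CB_r$ and locally uniformly over $z$. Choosing any $\varphi$ with $\int\varphi = 1$ makes $\{\varphi_z^\lambda\}_\lambda$ a parabolic approximate identity at $z$, so the mollification $(\eta \ast \check\varphi^\lambda)(z) = \eta(\varphi_z^\lambda)$ converges to $\eta$ in $\CS'$ as $\lambda \to 0$; but the right-hand side is $O(\lambda^\gamma)\to 0$ uniformly on compacts, so by dominated convergence $\eta = 0$.

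For existence, I would construct $\CR f$ as the limit of dyadic approximations. Fix a compactly supported scaling function $\phi \in \CB_r$ (for instance a parabolically rescaled Daubechies father wavelet of sufficiently high order, so that its translates and dilates form an orthonormal multi-resolution analysis), let $\Lambda_n$ be the parabolic grid of spacing $\sim 2^{-n}$, and let $\phi_y^n$ denote the corresponding parabolic dilates. Set
\begin{equ}
  (\CR_n f)(\psi) := \sum_{y \in \Lambda_n} (\Pi_y f(y))(\phi_y^n)\,\langle \phi_y^n,\psi\rangle\;.
\end{equ}
The crucial coherence estimate is that, for neighbouring grid points $y,y'$ with $|y-y'| \lesssim 2^{-n}$,
\begin{equ}
  \bigl|(\Pi_y f(y) - \Pi_{y'} f(y'))(\phi_y^n)\bigr| \lesssim 2^{-n\gamma}\;.
\end{equ}
This is proved by rewriting $\Pi_y f(y) = \Pi_{y'}(\Gamma_{y'y} f(y))$ via \eqref{e:alg}, so that the left-hand side equals $|\Pi_{y'}(\Gamma_{y'y} f(y) - f(y'))(\phi_y^n)|$. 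Decomposing over components of homogeneity $\alpha < \gamma$ and combining \eqref{e:ana} (which, after absorbing the harmless shift from $y'$ to $y$, gives the factor $2^{-n\alpha}\|\tau\|_\alpha$) with \eqref{e:defDgamma} (which gives $\|\Gamma_{y'y} f(y) - f(y')\|_\alpha \lesssim 2^{-n(\gamma-\alpha)}$) produces a net factor $2^{-n\gamma}$ per component, with only finitely many $\alpha$'s involved since $A$ is locally finite.

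The successive differences $\CR_n f - \CR_{n-1} f$ live in orthogonal wavelet complements at scale $2^{-n}$ and are bounded by $2^{-n\gamma}$ against test functions in $\CB_r$; since $\gamma > r$, the telescoping sum converges in $\CS'$ to a limit $\CR f$. To verify \eqref{e:bound}, given $\varphi \in \CB_r$ and $\lambda \in (0,1]$, pick $n$ with $2^{-n} \sim \lambda$ and split $(\CR f - \Pi_z f(z))(\varphi_z^\lambda)$ into a contribution from scales $\le n$ (bounded by the coherence estimate applied to grid points inside the support of $\varphi_z^\lambda$) and a tail from scales $>n$ (bounded by the summed telescope); both are of order $\lambda^\gamma$. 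Linearity in $f$ is immediate, and continuity in $(\Pi,\Gamma,f)$ with respect to \eqref{e:metricModel} follows by applying the same estimates to differences. I expect the main obstacle to be the multi-scale bookkeeping: when $r < 0$ the individual $\Pi_y f(y)$ are genuinely singular distributions, so pointwise reasoning fails; the wavelet framework (or an equivalent Littlewood--Paley decomposition) is essential to decouple scales cleanly, and the assumption $\gamma > r$ is exactly what makes the scale contributions summable against any element of $\CB_r$.
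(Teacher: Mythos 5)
Your proposal follows essentially the same route as the paper and the reference it cites: uniqueness (for $\gamma>0$) by the mollification argument the text already spells out, and existence by a multi-scale wavelet construction whose key ingredient is the coherence estimate obtained by combining the algebraic identity \eqref{e:alg} with the analytic bounds \eqref{e:ana} and \eqref{e:defDgamma}. One small point worth correcting in your closing remarks: the hypothesis $\gamma > r = \inf A$ is not what makes the dyadic corrections summable --- it is there simply to ensure $T_{<\gamma}\ne\{0\}$, so that $\CD^\gamma$ is non-trivial. Absolute summability of $\sum_n 2^{-n\gamma}$ requires $\gamma>0$; in the regime $r<\gamma\le 0$ the series does not converge absolutely, and one must exploit the vanishing moments and orthogonality of the wavelets to obtain convergence in some $\CC^{\tilde\alpha}$ with $\tilde\alpha<0$ (indeed, as the paper emphasises, $\CR f\not\in\CC^\gamma$ in general there), which is a genuinely delicate step that your sketch elides.
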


\begin{remark}
In the sequel, we will always consider $(\Pi,\Gamma)$ as fixed and view $\CR$ as a linear map, writing
$\CR f$ instead of $\CR(\Pi,\Gamma,f)$. The above notation does however make it plain that the full map $\CR$ is
\textit{not} a linear map. It is important to note that the continuity mentioned in the statement of 
Theorem~\ref{theo:reconstruction} is with respect to the topology of all of $\MM \ltimes \CD^\gamma$ and not
just that of $\CD^\gamma$ for a fixed model. This is very important when studying the stability of the solution
map to our stochastic PDEs under perturbations / approximations of the driving noise $\xi$.
\end{remark}

\begin{remark}
An important special case is given by situations where $\Pi_z \tau$ is actually a continuous function for every
$\tau \in T$ and every $z$. If $T$ contains components of negative homogeneity, this is certainly not the case for
every model in $\MM$, but there are certainly \textit{some} models for which this is true. In fact, all models
that are of interest of us will turn out to be limits of such models. Then, it turns out that $\CR f$ is also a
continuous function and one simply has 
\begin{equ}[e:formulaRf]
\bigl(\CR f\bigr)(z) = \bigl(\Pi_z f(z)\bigr)(z)\;.
\end{equ}
In the general case, this formula makes of course no sense since $\Pi_z f(z)$ is a distribution and cannot be
evaluated at $z$.
\end{remark}

\begin{remark}
Another special case is given by the situation where $f(z) = \Gamma_{z0}\tau$ for some fixed $\tau \in T$. In
this case, one has $f(z) - \Gamma_{zz'} f(z') = 0$, so that $f \in \CD^\gamma$ for every $\gamma$. These
are the functions that play the role of polynomials in our theory. In this case, one has $\CR f = \Pi_0 \tau$.
\end{remark}

\begin{remark}
We made a slight abuse of notation here since there is really a family of operators $\CR^\gamma$, one for
each regularity. However, this abuse is justified by the following consistency relation. Given $f \in \CD^\gamma$
and $\tilde \gamma < \gamma$, one can always construct $\tilde f$ by projecting $f(z)$ onto $T_{<\tilde \gamma}$
for every $z$. It turns out that one then necessarily has $\tilde f \in \CD^{\tilde \gamma}$ and $\CR \tilde f = \CR f$,
provided that $\tilde \gamma > 0$. This is also consistent with \eqref{e:formulaRf} since, if $\Pi_z \tau$ is a continuous
function and the homogeneity of $\tau$ is strictly positive, then $ \bigl(\Pi_z \tau\bigr)(z) = 0$.
\end{remark}

We refer to \cite[Thm~3.10]{Regularity} for a full proof of Theorem~\ref{theo:reconstruction} and to \cite{Notes} for
a simplified proof that only gives continuity in each ``fiber'' $\CD^\gamma$. The main idea
is to use a basis of compactly supported wavelets to construct approximations $\CR^n$ in such a 
way that our definitions can be exploited in a natural way to compare $\CR^{n+1}$ with $\CR^n$ and
show that the sequence of approximations is Cauchy in a suitable space of distributions $\CC^\alpha$.
In the most important case when $\gamma >0$, it turns out that while the existence of a map $\CR$ with
the required properties is highly non-trivial, its uniqueness is very easy to see. Given an element
$(\Pi,\Gamma,f) \in \MM \ltimes \CD^\gamma$, assume that we have two candidates $\zeta$ and $\bar \zeta$
for $\CR (\Pi,\Gamma,f)$. It then follows immediately from \eqref{e:bound} that one has the bound
\begin{equ}[e:diffzeta]
\bigl|\bigl(\zeta - \bar \zeta\bigr)(\varphi_z^\lambda)\bigr| \lesssim \lambda^\gamma\;,
\end{equ}
uniformly over $\varphi$ and $\lambda$, and locally uniformly over $z$.
Let now $\psi$ be a smooth test function with compact support and write $\psi^\lambda = \varphi_0^\lambda \star \psi$
for a test function $\varphi$ as above which is furthermore such that $\int \varphi(z)\,dz = 1$.
It then follows from the definition of a distribution that $\lim_{\lambda \to 0} \zeta(\psi^\lambda) = \zeta(\psi)$
and similarly for $\bar \zeta$. On the other hand, one has the identity
\begin{equ}
\zeta(\psi^\lambda) = \zeta \Bigl(\int \psi(z) \varphi_z^\lambda(\cdot) \,dz\Bigr) = \int \psi(z) \zeta(\varphi_z^\lambda)\,dz\;,
\end{equ}
so that, as a consequence of \eqref{e:diffzeta}, one has 
\begin{equ}
|\zeta(\psi^\lambda) - \bar\zeta(\psi^\lambda)| = \Bigl|\int \psi(z) \bigl(\zeta - \bar \zeta\bigr)(\varphi_z^\lambda)\,dz\Bigr| 
\lesssim \lambda^\gamma\;.
\end{equ}
Sending $\lambda$ to $0$ and using the fact that $\gamma > 0$, it immediately 
follows that $\zeta(\psi) = \bar\zeta(\psi)$. Since $\psi$ was arbitrary, we conclude that indeed $\zeta = \bar \zeta$. 
If $\gamma \le 0$ on the other hand, it is clear that $\CR$ cannot be uniquely determined by \eqref{e:bound}, since
this bound remains unchanged if we add to $\CR$ any distribution in $\CC^\gamma$. The existence of $\CR$ in the case $\gamma < 0$
is however still a non-trivial result since in general one has $\CR f \not \in \CC^\gamma$!

\section{Regularity structures for SPDEs}
\label{sec:SPDE}

We now return to the problem of providing a robust well-posedness theory for stochastic
PDEs of the type \eqref{e:Burgers}, \eqref{e:SPDEs}, \eqref{e:KPZ}, or even just \eqref{e:SPDE}. 
Our aim is to build a suitable
regularity structure for which we can reformulate our SPDE as a fixed point problem in $\CD^\gamma$ for
a suitable value of $\gamma$.

\begin{remark}
Actually, it turns out that since we are interested in Cauchy problems, there will always be some
singularity at $t=0$. This introduces additional technical complications and requires us to work
in spaces $\CD^{\gamma,\eta}$ that are really weighted versions of the spaces $\CD^\gamma$ with weights
allowing for some singularity (essentially of strength $\eta$) on the hyperplane $t=0$. While
this is technically quite tricky, it does not introduce any conceptual difficulty, so we chose not
to dwell on this problem in the present review.
\end{remark}

\subsection{General construction of the model space}
\label{sec:genConst}

Our first task is to construct the model space $T$. Since we certainly want to be able to represent arbitrary
smooth functions (for example in order to be able to take into account the contribution of the initial condition),
we want $T$ to contain the space $\bar T$ of abstract polynomials in $d+1$ indeterminates endowed with
the parabolic grading described in Section~\ref{sec:def}. Since the noise $\xi$ cannot be adequately represented by
polynomials, we furthermore add a basis vector $\Xi$ to $T$, which we postulate to have some homogeneity $\alpha < 0$
such that $\xi \in \CC^\alpha$. In the case of space-time white noise, we would choose $\alpha = -{d\over 2} - 1 - \kappa$
for some (typically very small) exponent $\kappa > 0$. In the case of purely spatial white noise, we would choose 
$\alpha = -{d\over 2} - \kappa$, etc.

At this stage, the discussion following \eqref{e:approxu} suggests that if our structure $T$ contains a
basis vector $\tau$ of homogeneity $\beta$ representing some distribution 
$\eta$ involved in the description of the right hand side of our equation, then it should also 
contain a basis vector of homogeneity $\beta + 2$ (the ``$2$'' here comes from the fact that convolution with the heat
kernel yields a gain of $2$ in regularity) representing the distribution $K \star \eta$ involved
in the description of the solution to the equation. Let us denote this new basis vector by $\CI(\tau)$,
where $\CI$ stands for ``integration''. In the special case where $\tau \in \bar T$, so that it represents 
an actual polynomial, we do not need any new symbol since $K$ convolved with a polynomial
yields a smooth function. One way of formalising this is to simply postulate that $\CI(X^k) = 0$ for every
multiindex $k$. 

\begin{remark}
For consistency, we will also always assume that $\int K(z)Q(z)\,dz = 0$ for all polynomials $Q$ 
of some fixed, but sufficiently high, degree. Since $K$ is an essentially arbitrary truncation of 
the heat kernel, we can do this without loss of generality.
\end{remark}

If the right hand side of our equation involves the spatial derivatives of the solution,
then, for each basis vector $\tau$ of homogeneity $\beta$ representing some distribution 
$\eta$ appearing in the description of the solution,
we should also have a basis vector $\DD_i \tau$ of homogeneity $\beta - 1$ representing $\partial_i \eta$
and appearing in the description of the derivative of the solution in the direction $x_i$.

Finally, if the right hand side of our equation involves a product between two terms
$F$ and $\bar F$, and if basis vectors $\tau$ and $\bar \tau$ respectively are involved in their description,
then we should also have a basis vector $\tau \bar \tau$ which would be involved in the description of the 
product. If $\tau$ and $\bar \tau$ represent the distributions $\eta$ and $\bar \eta$ respectively,
then this new basis vector represents the distribution $\eta \bar \eta$, whatever this actually means. 
Regarding its homogeneity, by analogy with the case of polynomials, it is natural to impose that the
homogeneity of $\tau \bar \tau$ is the sum of the homogeneities of its two factors.

This suggests that we should build $T$ by taking as its basis vectors some formal expressions built from the symbols 
$X$ and $\Xi$, together with the operations $\CI(\cdot)$, $\DD_i$, and multiplication. Furthermore, the natural way
of computing the homogeneity of a formal expression in view of the above is to associate homogeneity 
$2$ to $X_0$, $1$ to $X_i$ for $i \neq 0$,
$\alpha$ to $\Xi$, $2$ to $\CI(\cdot)$, and $-1$ to $\DD_i$, and to simply add the homogeneities of all symbols appearing 
in any given expression. Denote by $\CF$ the collection of all formal expressions that can be 
constructed in this way and denote by $|\tau|$ the homogeneity of $\tau \in \CF$,
so we have for example
\begin{equ}
\bigl|X_i \Xi\bigr| = \alpha + 1\;,\qquad  \bigl|\CI(\Xi)^2\CI(X_i \DD_j\CI(\Xi))\bigr| = 3\alpha + 8\;,\qquad \text{etc.}
\end{equ}
We note however that if we simply took for $T$ the space of linear combinations of \textit{all} elements in $\CF$ then, 
since $\alpha < 0$, there would be basis vectors of arbitrarily negative homogeneity,
which would go against Definition~\ref{def:regStruct}. What saves us is that most formal expressions
are not needed in order to formulate our equations as fixed point problems. For example, the expression $\Xi^2$
is useless since we would never try to square the driving noise. Similarly, if we consider \eqref{e:AC}, then 
$\CI(\Xi)$ is needed for the description of the solution, which implies that $\CI(\Xi)^2$ and $\CI(\Xi)^3$
are needed to describe the right hand side, but we do not need $\CI(\Xi)^4$ for example.

\subsection{Specific model spaces}\label{sec:spaces}

This suggests that we should take $T$ as the linear combinations of only those formal expressions $\tau \in \CF$ that 
are actually expected to appear in the description of the solution to our equation or its right hand side.
Instead of trying to formulate a general construction (see \cite[Sec.~8.1]{Regularity} for such an attempt), 
let us illustrate this by a few examples.
We first focus on the case of \eqref{e:AC} and we construct subsets $\CU$ and $\CV$ of $\CF$ that are used in the 
description of the solution and the right hand side of the equation respectively. These are defined as the smallest
subsets of $\CF$ with the following properties:
\begin{equ}[e:UVAC]
\CT \subset \CU \cap \CV\;,\quad
\{\CI(\tau)\,:\, \tau \in \CV \setminus \CT\} \subset \CU\;,\quad
\{\Xi\} \cup \{\tau_1 \tau_2 \tau_3\,:\, \tau_i \in \CU\} \subset \CV\;,
\end{equ}
where we used the notation $\CT = \{X^k\}$ with $k$ running over all multiindices, so that the space of Taylor polynomials
$\bar T$ is the linear span of $\CT$.
We then define $T$ as the space of all linear combinations of elements of $\CV$. Note that $\CU \subset \CV$ by the first and 
last properties, so that linear combinations of elements of $\CU$ also belong to $T$ and we denote by $T_\CU$ the corresponding
subspace of $T$.
This construction is such that if we have any function $\Phi \colon \R^{d+1} \to T_\CU$, then we can define in a natural
way a function $\Xi - \Phi^3 \colon \R^{d+1} \to T$ by the last property. 
Furthermore, by the second property, one has again $\CI(\Xi - \Phi^3) \colon \R^{d+1} \to T_\CU$,
which suggests that $T$ is indeed sufficiently rich to formulate a fixed point problem mimicking the mild formulation
of \eqref{e:AC}. Furthermore, one has

\begin{lemma}\label{lem:critAC}
If $\CU$ and $\CV$ are the smallest subsets of $\CF$ satisfying \eqref{e:UVAC} and 
one has $|\Xi| > -3$ then, for every $\gamma > 0$, the
set $\{ \tau \in \CU\,:\, |\tau| < \gamma\}$ is finite. 
\end{lemma}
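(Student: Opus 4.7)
The plan is to prove the lemma by a subcriticality argument based on a few integer statistics attached to formal expressions. For any $\tau \in \CF$, I introduce three non-negative integers: $n(\tau)$ counting the occurrences of $\Xi$, $m(\tau)$ counting the applications of $\CI$, and $q(\tau)$ summing the parabolic degrees of all $X$-leaves. By the additive nature of the homogeneity rules described in Section~\ref{sec:genConst}, these satisfy
\begin{equ}
|\tau| \;=\; |\Xi|\,n(\tau) + 2\,m(\tau) + q(\tau)\;.
\end{equ}
Setting $\alpha_0 = |\Xi|$ and $\delta = \alpha_0 + 3 > 0$ (this is precisely where the hypothesis $|\Xi| > -3$ enters), the core estimate I aim to prove is
\begin{equ}
|\tau| \;\geq\; \delta\,n(\tau) - 1\qquad \text{for every } \tau \in \CU\;.
\end{equ}

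I would prove this estimate by strong induction on the ``tree complexity'' of $\tau$, following the recursive description of $\CU$. An element $\tau \in \CU$ is either of the form $X^k$ (where $n(\tau) = 0$ and the inequality is immediate) or of the form $\CI(\sigma)$ for some $\sigma \in \CV \setminus \CT$. In the latter situation, $\sigma$ is either $\Xi$ itself, in which case $|\tau| = \alpha_0 + 2 = \delta - 1$ and equality is achieved, or a product $\tau_1\tau_2\tau_3$ with each $\tau_i \in \CU$. The key observation is that, because $\sigma \notin \CT$, at least one of the factors $\tau_i$ cannot be a pure polynomial, and therefore has $n(\tau_i) \geq 1$ and is eligible for the inductive hypothesis. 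A short case analysis based on how many of the $\tau_i$ satisfy $n(\tau_i) \geq 1$ (using the induction on those, and the bound $|X^{k_i}| \geq 0$ on the purely polynomial factors) then closes the argument. The tightest case is when all three factors have $n \geq 1$, where the three $-1$'s from the inductive estimates are exactly compensated by the $+2$ gain coming from the outer $\CI$, plus one unit of slack; this is precisely why $|\Xi| = -3$ is the critical value at which the recursion degenerates.

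Given the key estimate, finiteness follows quickly. If $\tau \in \CU$ satisfies $|\tau| < \gamma$, then either $n(\tau) = 0$, in which case $\tau = X^k$ with $|k| < \gamma$ leaving only finitely many choices, or $n(\tau) \geq 1$ and $n(\tau) < (\gamma+1)/\delta$, so again $n(\tau)$ takes only finitely many values. In all cases the identity $|\tau| = \alpha_0 n(\tau) + 2 m(\tau) + q(\tau)$, combined with $m(\tau), q(\tau) \geq 0$, yields uniform upper bounds on $m(\tau)$ and $q(\tau)$ as well. Since every element of $\CU$ is built, up to the natural identifications making $\one$ the multiplicative identity and multiplication commutative, from the finite alphabet $\{\Xi, \CI, X_0, \dots, X_d\}$ together with ternary multiplication, only finitely many formal expressions can have values of $n, m, q$ within a prescribed bounded range. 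The main obstacle is thus the key estimate itself: it requires setting up the inductive bookkeeping with the correct offset $-1$ (rather than $0$), precisely because deep nesting of triple products saturates the inequality, while the remainder of the argument is essentially routine enumeration.
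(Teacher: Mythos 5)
The paper does not actually prove Lemma~\ref{lem:critAC}: it is stated without proof, with the general construction deferred to \cite[Sec.~8.1]{Regularity}, so there is no in-text proof to compare against. Your argument is nonetheless correct, and it is the natural subcriticality count one would write.

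In detail: the invariant $|\tau| \geq \delta\, n(\tau) - 1$ with $\delta = |\Xi|+3$ is exactly the right one. Your case split on how many of the three factors of $\sigma = \tau_1\tau_2\tau_3$ carry a $\Xi$ is clean (the ``$\sigma \notin \CT$'' observation correctly rules out the case with no such factor), and the offset $-1$ is not a bookkeeping convenience but is forced: the fully nested trees $\CI(\Xi)$, $\CI(\CI(\Xi)^3)$, $\CI(\CI(\CI(\Xi)^3)^3),\dots$ saturate the inequality at every level, and their homogeneity $3^{k-1}\delta - 1$ would stay bounded if $\delta \leq 0$, showing that $|\Xi| > -3$ is sharp. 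The deduction of bounds on $n$, $m$, $q$ from $|\tau| < \gamma$ is correct. The one place that deserves a sentence more of justification is the final enumeration step: bounding $n$, $m$, $q$ does not by itself bound the number of nodes in the tree; one should add that in $\CU$ every internal node is an $\CI$-node whose argument is either $\Xi$ or a triple product of elements of $\CU$, so each $\CI$-node has one or three children, the number of product nodes is at most $m$, and hence the total number of nodes is $O(m)$. With that observation the leaves are drawn from the finite alphabet $\{\Xi\} \cup \{X^k : |k| \le q\}$ (after absorbing $\one$) and the finiteness claim follows. This is a presentational point rather than a genuine gap; the proof is sound.
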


Note that in terms of the regularity of space-time white noise, the condition $\alpha > -3$ corresponds to 
the restriction $d < 4$. It is well-known \cite{MR678000,MR643591} that $4$ is the critical dimension for the static analogue
to this model, which strongly suggests that this is indeed the ``right'' condition.

In the case of the KPZ equation, we similarly set 
\begin{equ}[e:UVKPZ]
\CT \subset \CU \cap \CV\;,\quad
\{\CI(\tau)\,:\, \tau \in \CV \setminus \CT\} \subset \CU\;,\quad
\{\Xi\}\cup \{\DD \tau_1\cdot  \DD \tau_2\,:\, \tau_i \in \CU\} \subset \CV\;.
\end{equ}
This time, $\CU \not \subset \CV$, so we define $T$ as the space of all linear combinations of elements
in $\CU \cup \CV$. Again, we have

\begin{lemma}\label{lem:critKPZ}
If $\CU$ and $\CV$ are the smallest subsets of $\CF$ satisfying \eqref{e:UVKPZ} and 
one has $|\Xi| > -2$ then, for every $\gamma > 0$, the
set $\{ \tau \in \CU \cup \CV\,:\, |\tau| < \gamma\}$ is finite. 
\end{lemma}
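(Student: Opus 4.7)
The plan is to exploit the recursive structure of $\CU$ and $\CV$ from \eqref{e:UVKPZ} and to set up an additive accounting of the homogeneity of each element. By the definition of $\CU$, every $\tau \in \CU$ is either a monomial $X^k \in \CT$, or of the form $\CI(\Xi)$, or of the form $\CI(\DD\tau_1 \cdot \DD\tau_2)$ with $\tau_i \in \CU$; unfolding this identifies each $\tau \in \CU \setminus \CT$ with a finite rooted binary tree whose internal vertices represent the compound operation $\CI(\DD(\cdot)\cdot \DD(\cdot))$ and whose leaves carry a label of either $\CI(\Xi)$ or $X^{k_j}$.

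At each internal vertex the homogeneities combine cleanly: since $\CI$ raises the homogeneity by $2$, each $\DD$ lowers it by $1$, and multiplication sums homogeneities, one has $|\CI(\DD\tau_1 \cdot \DD\tau_2)| = |\tau_1| + |\tau_2|$. A straightforward induction on the size of the tree then yields, for every $\tau \in \CU$,
\begin{equ}
|\tau| = p(\tau)\,(|\Xi|+2) + \sum_{j} |k_j|\;,
\end{equ}
where $p(\tau)$ counts the $\CI(\Xi)$-leaves of $\tau$ and the sum runs over the polynomial leaves $X^{k_j}$. Setting $\eps = |\Xi|+2$, which is \emph{strictly positive} by the hypothesis $|\Xi| > -2$, the constraint $|\tau| < \gamma$ forces both $p(\tau) < \gamma/\eps$ and $\sum_j |k_j| < \gamma$. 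Using $\DD\one = 0$ to discard trees in which some $\one = X^0$ appears inside a product $\DD\tau_1 \cdot \DD\tau_2$, every remaining polynomial leaf satisfies $|k_j| \ge 1$, so the number of polynomial leaves is also bounded by $\gamma$.

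The total number of leaves of $\tau$ is therefore at most $\gamma/\eps + \gamma$, and since the underlying tree is binary the whole tree has bounded size. Each leaf admits only finitely many labels (one choice $\CI(\Xi)$, finitely many $X^{k_j}$ with $|k_j| \le \gamma$), so only finitely many $\tau \in \CU$ satisfy $|\tau|<\gamma$. The elements of $\CV$ are either in $\CT$ (finite, since only finitely many $X^k$ have $|k| < \gamma$), equal to $\Xi$, or of the form $\DD\tau_1 \cdot \DD\tau_2$ with $|\tau_1| + |\tau_2| < \gamma + 2$, in which case both $\tau_i$ lie in the finite subset of $\CU$ just identified. This completes the proof.

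The main obstacle is really the additive formula of the second paragraph: it is the precise cancellation between the $+2$ contributed by $\CI$ and the two $-1$'s from the $\DD$'s that collapses the homogeneity of a product-subtree to the sum of the homogeneities of its children. The condition $|\Xi| > -2$ is exactly what makes $\eps > 0$, and hence what makes the a priori bound $p(\tau) \le \gamma/\eps$ effective; were $|\Xi| \le -2$, trees with arbitrarily many $\CI(\Xi)$-leaves could have arbitrarily small (or even arbitrarily negative) homogeneity and the lemma would fail. This is consistent with the intuition that the KPZ equation is ``subcritical'' precisely in spatial dimension one.
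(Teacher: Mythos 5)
Your proof is correct and supplies the argument the paper leaves implicit. The decisive step is the additive formula: unfolding the recursion identifies each $\tau \in \CU \setminus \CT$ with a binary tree whose internal nodes encode $\CI(\DD(\cdot)\cdot\DD(\cdot))$, and the exact cancellation $2 - 1 - 1 = 0$ in $|\CI(\DD\tau_1\cdot\DD\tau_2)| = |\tau_1| + |\tau_2|$ collapses the homogeneity of any $\tau$ to the sum of its leaf homogeneities, $p(\tau)(|\Xi|+2) + \sum_j |k_j|$. The hypothesis $|\Xi| > -2$ makes each $\CI(\Xi)$-leaf contribute a strictly positive $\eps = |\Xi|+2$, bounding the number of such leaves by $\gamma/\eps$; the convention $\DD\one = 0$ forces each polynomial leaf to have $|k_j|\ge 1$, bounding the number of those by $\gamma$. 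You are right to flag that convention explicitly: without it, $\CI(\DD\one\cdot\DD\one)$ would have homogeneity zero and, iterating, one would obtain infinitely many expressions of homogeneity zero, so the lemma would in fact be false. It is a genuine hypothesis that is hidden in how the formal expressions $\CF$ must be set up, namely $\DD_i X^k = k_i X^{k-e_i}$, which in particular gives $\DD_i\one = 0$. One small slip at the very end: from $|\tau_1|+|\tau_2|<\gamma+2$ and $|\tau_i|\ge 0$ you only conclude $\tau_i \in \{\tau\in\CU : |\tau|<\gamma+2\}$, not membership in the set you constructed for the original $\gamma$; but that larger set is finite by exactly the same argument, so the conclusion for $\CV$ is unaffected.
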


This time, the condition $\alpha > -2$ corresponds to the restriction $d < 2$, which again makes sense since
$2$ is the critical dimension for the KPZ equation \cite{KPZOrig}.
The last example we would like to consider is the class of SPDEs \eqref{e:SPDE}. In this case, the right hand side
is not polynomial. However, we can apply the same methodology as above as if the nonlinear functions $f$ and $g$
were simply polynomials of arbitrary degree. We thus impose $\CT \subset \CU \cap \CV$ and $\{\CI(\tau)\,:\, \tau \in \CV \setminus \CT\}$
as before, and then further impose that
\begin{equ}
\Big\{\Xi\prod_{i=1}^m \tau_i\,:\, m \ge 1\;\&\; \tau_i \in \CU\Big\}\cup 
\Big\{\prod_{i=1}^m \tau_i\,:\, m \ge 1\;\&\; \tau_i \in \CU\Big\} \subset \CV\;.
\end{equ}
Again, we have $\CU \subset \CV$ and we define $T$ as before. Furthermore, it is straightforward to verify that
the analogue to lemmas~\ref{lem:critAC} and \ref{lem:critKPZ} holds, provided that $|\Xi| > -2$.

\subsection{Construction of the structure group}
\label{sec:group}

Now that we have some idea on how to construct $T$ for the problems that are of interest to us (with a slightly
different construction for each class of models but a clear common thread), we 
would like to build a corresponding structure group $G$. In order to give a motivation for the definition of $G$,
it is very instructive to simultaneously think about the structure of the corresponding models. 
Let us first consider some smooth driving noise, which we call $\xi_\eps$ to distinguish it from the 
limiting noise $\xi$. At this stage however, this should be thought of as simply a fixed smooth (or at 
least continuous) function. In view of the discussion of Section~\ref{sec:genConst}, for each of the
model spaces built in Section~\ref{sec:spaces}, we can associate to any smooth function $\xi_\eps$ a 
linear map $\PPi\colon T \to \CC^\infty(\R^{d+1})$ in the following way. We set
\minilab{e:defPPi}
\begin{equ}[e:defPPi1]
\bigl(\PPi X_i\bigr)(z) = z_i\;,\qquad \bigl(\PPi \Xi\bigr)(z) = \xi_\eps(z)\;,
\end{equ}
and we then define $\PPi$ recursively by
\minilab{e:defPPi}
\begin{equ}[e:defPPi2]
\PPi \CI(\tau) = K \star \PPi \tau\;,\qquad
\PPi \DD_i\tau = \partial_i \PPi \tau\;,\qquad 
\PPi(\tau \bar \tau) = \bigl(\PPi \tau\bigr)\cdot \bigl(\PPi\bar \tau\bigr)\;,
\end{equ}
where $\cdot$ simply denotes the pointwise product between smooth functions. At this stage, it is however
not clear how one would build an actual model in the sense of Definition~\ref{def:model} associated to $\xi_\eps$.
It is natural that one would set
\minilab{e:defPi}
\begin{equ}[e:defPi1]
\bigl(\Pi_z X_i\bigr)(z') = z_i' - z_i\;,\qquad \bigl(\Pi_z \Xi\bigr)(z') = \xi_\eps(z')\;,
\end{equ}
and then 
\minilab{e:defPi}
\begin{equ}[e:defPi2]
\Pi_z \DD_i\tau = \partial_i \Pi_z \tau\;,\qquad 
\Pi_z(\tau \bar \tau) = \bigl(\Pi_z \tau\bigr)\cdot \bigl(\Pi_z \bar \tau\bigr)\;.
\end{equ}
It is less clear \textit{a priori} how to define $\Pi_z \CI(\tau)$. The problem is that if
we simply set $\Pi_z \CI(\tau) = K \star \Pi_z \tau$, then the bound \eqref{e:ana} would typically
no longer be compatible with the requirement that $|\CI(\tau)| = |\tau| + 2$. One way to circumvent
this problem is to simply subtract the Taylor expansion of $K \star \Pi_z \tau$ around $z$ up to the
required order. We therefore set
\minilab{e:defPi}
\begin{equ}[e:defPi3]
\bigl(\Pi_z \CI(\tau)\bigr)(z') = \bigl(K \star \Pi_z \tau\bigr)(z') - \sum_{|k| < |\tau| + 2} {(z'-z)^k \over k!}
\bigl(D^{(k)}K \star \Pi_z \tau\bigr)(z)\;.
\end{equ}
It can easily be verified (simply proceed recursively) 
that if we define $\Pi_z$ in this way and $\PPi$ as in \eqref{e:defPPi} then, for every $z$, one can find a linear map 
$F_z \colon T \to T$ such that $\Pi_z = \PPi F_z$. In particular, one has $\Pi_{z'} = \Pi_z F_z^{-1}F_{z'}$. 
Furthermore, $F_z$ is ``upper triangular'' with the identity 
on the diagonal in the sense of \eqref{e:coundGroup}.
It is also easily seen by induction that the matrix elements of $F_z$ are all given by some polynomials in 
$z$ and in the quantities $\bigl(D^{(k)}K \star \Pi_z \tau\bigr)(z)$.

This suggests that we should take for $G$ the set of all linear maps that can appear in this fashion. It is
however not clear in principle how to describe $G$ more explicitly and it is also not clear that it even forms
a group. In order to describe $G$, it is natural to introduce some abstract space $T_+$ (different from $T$)
which is given by all possible polynomials in $d+1$ commuting variables $\{Z_i\}_{i=0}^d$ as well as 
countably many additional commuting variables $\{\CJ_k(\tau)\,:\, \tau \in (\CU \cup\CV) \setminus \CT\; \&\; |k| < |\tau|+2\}$.
One should think of $Z_i$ as representing $z_i$ and $\CJ_k(\tau)$ as representing $\bigl(D^{(k)}K \star \Pi_z \tau\bigr)(z)$,
so that the matrix elements of $F_z$ are represented by elements of $T_+$.
There are no relations between these coefficients, which suggests that elements of $G$ are described
by an arbitrary morphism $f \colon T_+ \to \R$, i.e.\ an arbitrary linear map which furthermore satisfies
$f(\sigma \bar \sigma) = f(\sigma)\, f(\bar \sigma)$, so that it is uniquely determined by $f(Z_i)$ and $f(\CJ_k(\tau))$.

Given any linear map $\Delta \colon T \to T \otimes T_+$ and a morphism $f$ as above, one can then define
a linear map $\hat \Gamma_f \colon T \to T$ by
\begin{equ}
\hat \Gamma_f \tau = \bigl(I \otimes f\bigr) \Delta \tau\;.
\end{equ}
(Here we identify $T$ with $T \otimes \R$ in the obvious way.) The discussion given above then suggests that 
it is possible to construct $\Delta$ in such a way that if we define $f_z$ by 
\begin{equ}[e:deffz]
f_z(Z_i) = z_i\;,\qquad f_z(\CJ_k(\tau)) = \bigl(D^{(k)}K \star \Pi_z \tau\bigr)(z)\;,
\end{equ}
then one has $\hat \Gamma_{f_z} = F_z$.
The precise definition of $\Delta$ is irrelevant for our discussion, but a recursive description of it can
easily be recovered simply by comparing \eqref{e:defPi} to \eqref{e:defPPi}. In particular, it is possible to
show that
$\Delta \tau$ is of the form
\begin{equ}[e:propDelta]
\Delta \tau = \tau \otimes \one + \sum_{i} c_i^\tau \tau_i \otimes \sigma_i\;,
\end{equ}
for some expressions $\tau_i \in T$ with $|\tau_i| < |\tau|$ and for some non-empty monomials $\sigma_i \in T_+$
such that $|\sigma_i| + |\tau|_i = |\tau|$. Here, we associate a homogeneity to elements in $T_+$ by
setting $|Z_0| = 2$, $|Z_i| = 1$ for $i \neq 0$, and $|\CJ_k(\tau)| = |\tau|+2-|k|$.

In particular, we see that if we let $e \colon T_+ \to \R$ be the trivial morphism for which $e(Z_i) = e(\CJ_k(\tau)) = 0$,
so that one only has $e(\one) = 1$ where $\one$ is the empty product, then 
\begin{equ}[e:defe]
\hat \Gamma_e \tau = \tau\;.
\end{equ}
The important fact for our purpose is
the following, a proof of which can be found in \cite[Sec.~8]{Regularity}. Here, we denote by $\CM \colon T_+ \otimes T_+ \to T_+$ the
multiplication operator $\CM (\sigma \otimes \bar \sigma) = \sigma \bar \sigma$ and by $I$ the identity. 

\begin{lemma}
There exists a map $\Delta^+\colon T_+ \to T_+ \otimes T_+$ such that the following identities hold:
\begin{equs}[2][e:idenDelta]
\Delta^+ (\sigma \bar \sigma) &= \bigl(\Delta^+ \sigma\bigr) \cdot \bigl(\Delta^+\bar \sigma\bigr)\;,\quad&
(\Delta \otimes I)\Delta &= (I \otimes \Delta^+)\Delta\;,\\
(e \otimes I)\Delta^+ &= (I \otimes e)\Delta^+ = I\;,\quad &
(\Delta^+ \otimes I)\Delta^+ &= (I \otimes \Delta^+)\Delta^+\;.
\end{equs}
Furthermore, there exists a map $\CA \colon T_+ \to T_+$ which is multiplicative in the sense that $\CA(\sigma \bar \sigma) = (\CA \sigma)\cdot(\CA \bar \sigma)$, and which is such that 
$\CM(I \otimes \CA)\Delta^+ = \CM(\CA \otimes I)\Delta^+ = e$, with $e\colon T_+ \to \R$ as in \eqref{e:defe}.
\end{lemma}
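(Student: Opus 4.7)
\medskip

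\noindent\textbf{Proof proposal.} The lemma is essentially asserting that $T_+$ is a commutative Hopf algebra (with coproduct $\Delta^+$, counit $e$, and antipode $\CA$) and that $\Delta$ turns $T$ into a left comodule algebra over $T_+$. I would therefore proceed as follows.

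First, I would define $\Delta^+$ on the algebraic generators of $T_+$, namely $Z_i$ and $\CJ_k(\tau)$ for $\tau \in (\CU\cup \CV)\setminus \CT$ and $|k| < |\tau|+2$, and extend to all of $T_+$ by requiring multiplicativity (which gives the first identity in \eqref{e:idenDelta} for free). The natural choice for the polynomial generators is
\begin{equ}
\Delta^+ Z_i = Z_i \otimes \one + \one \otimes Z_i\;,
\end{equ}
which reflects the geometric fact that $f_z(Z_i)=z_i$ transforms additively under shifts. For $\CJ_k(\tau)$, motivated by the intended interpretation $f_z(\CJ_k(\tau)) = (D^{(k)} K \star \Pi_z \tau)(z)$, I would write down the Taylor expansion of $(D^{(k)}K\star \Pi_{z'}\tau)(z')$ around a second base point, expand $\Pi_{z'} = \Pi_z \Gamma_{zz'}^{-1}$ using the recursive form \eqref{e:propDelta} of $\Delta$, and read off the unique formula for $\Delta^+\CJ_k(\tau)$ that makes the bookkeeping consistent. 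Concretely, one obtains a recursive expression of the form
\begin{equ}
\Delta^+ \CJ_k(\tau) = \CJ_k(\tau) \otimes \one + \sum \frac{(-Z)^\ell}{\ell!}\otimes \CJ_{k+\ell}(\tau) + \text{lower-order terms involving $\CJ$'s of sub-expressions of $\tau$}\;,
\end{equ}
where the lower-order terms are determined by the summands in \eqref{e:propDelta}.

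With this choice I would then verify the four identities in \eqref{e:idenDelta}. The counit identities $(e\otimes I)\Delta^+ = (I\otimes e)\Delta^+ = I$ are immediate from the form of $\Delta^+$ on generators (the ``cross'' terms all contain a non-empty factor which $e$ annihilates) and pass to products by multiplicativity of both $e$ and $\Delta^+$. The compatibility identity $(\Delta\otimes I)\Delta = (I\otimes \Delta^+)\Delta$ is the heart of the matter; I would prove it by induction on the homogeneity of $\tau \in T$, using the recursive definition of $\Delta$ determined by \eqref{e:defPi1}--\eqref{e:defPi3} together with the recursive definition of $\Delta^+$ on the $\CJ_k(\tau)$. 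The key point is that $\Delta^+\CJ_k(\tau)$ was tailored exactly so that applying a further change of base point commutes with the single change of base point; this is the step where there is no freedom and where the definition must be checked to close properly. Coassociativity $(\Delta^+\otimes I)\Delta^+ = (I\otimes \Delta^+)\Delta^+$ then follows: on the $Z_i$ both sides equal $Z_i\otimes \one\otimes \one + \one \otimes Z_i\otimes \one + \one \otimes \one\otimes Z_i$, and on the $\CJ_k(\tau)$ it reduces by the compatibility identity just proved (applied to both sides with suitable identifications) to an identity that holds by the inductive hypothesis on sub-expressions.

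For the antipode $\CA$, I would exploit the fact that $T_+$ is connected graded (in the sense that the empty product $\one$ spans the degree-zero component, with all generators of strictly positive homogeneity) and commutative. In any such graded bialgebra an antipode automatically exists and is unique; I would make this explicit by a recursion on homogeneity. For a generator $\sigma$ the identity $\CM(I\otimes \CA)\Delta^+\sigma = e(\sigma) = 0$ together with the decomposition $\Delta^+\sigma = \sigma\otimes \one + \one \otimes \sigma + \sum_i \sigma'_i \otimes \sigma''_i$ (where all $\sigma''_i$ have homogeneity strictly less than $|\sigma|$) forces
\begin{equ}
\CA\sigma = -\sigma - \sum_i \sigma'_i\cdot \CA\sigma''_i\;,
\end{equ}
which is well-defined by induction, and then I extend $\CA$ to all of $T_+$ multiplicatively. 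Commutativity of $T_+$ and the coassociativity proved above guarantee that the two relations $\CM(I\otimes \CA)\Delta^+ = \CM(\CA\otimes I)\Delta^+ = e$ coincide, so only one needs to be checked inductively.

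The main obstacle is plainly the compatibility identity $(\Delta\otimes I)\Delta = (I\otimes \Delta^+)\Delta$: one must organise the recursive definitions of $\Delta$ and $\Delta^+$ (both of which involve the Taylor-remainder subtraction present in \eqref{e:defPi3}) so that the combinatorics of ``subtracting a Taylor polynomial around $z$ and then around $z'$'' collapses to ``subtracting a single Taylor polynomial,'' cleanly and at every order. All the other identities, and the existence of the antipode, are essentially formal consequences of getting this one right.
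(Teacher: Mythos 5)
Your proposal follows essentially the same route as the proof the paper refers to (Sec.~8 of \cite{Regularity}): define $\Delta^+$ multiplicatively on generators, with the primitive coproduct on $Z_i$ and a Taylor-expansion recursion on $\CJ_k(\tau)$ whose lower-order terms are the $\CJ_k$'s of the sub-expressions appearing in $\Delta\tau$; establish the counit, comodule, and coassociativity identities by a joint structural induction intertwined with the recursive definition of $\Delta$; and then construct the antipode by the standard connected-graded-bialgebra recursion, with commutativity guaranteeing that the left and right antipode conditions coincide. Two small slips worth correcting but not affecting the structure: by \eqref{e:alg} one has $\Pi_{z'} = \Pi_z\Gamma_{zz'}$, not $\Pi_z\Gamma_{zz'}^{-1}$; and the sign $(-Z)^\ell$ in your sketch of $\Delta^+\CJ_k(\tau)$ depends on conventions (the sign in $\Delta X_i$) that the present survey does not fully fix, so it should be read off from the requirement $\hat\Gamma_{f_z}\hat\Gamma_{f_{z'}}=\hat\Gamma_{f_z\circ f_{z'}}$ rather than asserted.
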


\begin{remark}
In technical lingo, this lemma states that $(T_+,\cdot,\Delta^+)$ is a Hopf algebra with antipode $\CA$,
and that $T$ is a comodule over $T_+$.
\end{remark}

The importance of this result is that it shows that $G$ is indeed a group. For any two morphisms $f$ and $g$,
we can define a linear map $f\circ g \colon T_+ \to \R$ by 
\begin{equ}
\bigl(f \circ g\bigr)(\sigma) = \bigl(f \otimes g\bigr)\Delta^+\sigma\;.
\end{equ}
As a consequence of the first identity in \eqref{e:idenDelta}, $f\circ g$ is again a morphism on $T_+$.
As a consequence of the second identity, one has $\hat \Gamma_{f \circ g} = \Gamma_f \,\Gamma_g$.
The last identity shows that $(f_1 \circ f_2) \circ f_3 = f_1 \circ (f_2 \circ f_3)$, while the properties
of $\CA$ ensure that if we set $f^{-1}(\sigma) = f(\CA \sigma)$, then $f \circ f^{-1} = f^{-1} \circ f = e$.
Finally, the third identity in \eqref{e:idenDelta} shows that $e$ is indeed the identity element, thus 
turning the set of all morphisms of $T_+$ into a group under $\circ$, acting on $T$ via $\hat \Gamma$.

Let us now turn back to our models. Given a smooth function $\xi_\eps$, we define $\Pi_z$ as in
\eqref{e:defPi} and $f_z$ by \eqref{e:deffz}. We then also define linear maps $\Gamma_{zz'}$ by 
$\Gamma_{zz'} = \hat \Gamma_{\gamma_{zz'}}$ with $\gamma_{zz'} = f_z^{-1} \circ f_{z'}$. We then have

\begin{lemma}\label{lem:lift}
For every smooth function $\xi_\eps$, the pair $(\Pi,\Gamma)$ defined above is a model.
\end{lemma}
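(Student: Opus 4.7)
The plan is to verify the two algebraic identities in \eqref{e:alg} and the two analytic bounds in \eqref{e:ana} separately, with the analytic bounds established by a simultaneous induction over the homogeneity of basis vectors of $T$.

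For the algebraic part, the starting observation is the identity $\Pi_z = \PPi \hat\Gamma_{f_z}$, which I would prove by induction on the recursive construction: for $X_i$ and $\Xi$ it is immediate from \eqref{e:defPPi1} and \eqref{e:defPi1}, the cases of $\DD_i\tau$ and $\tau \bar\tau$ are stable under multiplication by $\hat\Gamma_{f_z}$ thanks to the multiplicativity expressed in \eqref{e:propDelta}, and the case of $\CI(\tau)$ is built into \eqref{e:defPi3} by design (it is precisely the Taylor remainder that appears once one expands $K \star \PPi\tau$ around $z$, written through the generators $\CJ_k(\tau)$ of $T_+$). Once this is in hand, the first identity in \eqref{e:alg} follows from $\hat\Gamma_{f_z}\hat\Gamma_{\gamma_{zz'}} = \hat\Gamma_{f_z \circ \gamma_{zz'}} = \hat\Gamma_{f_{z'}}$, where composition via $\Delta^+$ as in the second identity of \eqref{e:idenDelta} is exactly what ensures the comodule action $\hat\Gamma$ turns composition in $G$ into composition of operators. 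The second identity in \eqref{e:alg} is then the group law $\gamma_{zz'} \circ \gamma_{z'z''} = \gamma_{zz''}$, which is a direct consequence of the coassociativity of $\Delta^+$ together with the fact that $\gamma_{zz'} = f_z^{-1} \circ f_{z'}$.

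For the analytic bounds I would induct on $|\tau|$, proving simultaneously the two bounds for all basis vectors up to a given homogeneity. The base cases $X^k$ and $\Xi$ are trivial: the polynomial case follows from the scaling of monomials, and for $\Xi$ one uses $\|\xi_\eps\|_\infty < \infty$ together with $\alpha < 0$ so that a uniform bound implies $\lesssim \lambda^\alpha$ on $\lambda \in (0,1]$. For products $\tau \bar\tau$ I would record, as a byproduct of the induction, the pointwise bound $|(\Pi_z \tau)(z')| \lesssim |z'-z|^{|\tau|\wedge 0}$ for negative homogeneity and the analogous Taylor-remainder bound for positive homogeneity, both of which are easy to carry through the recursion and which combine multiplicatively to give $|(\Pi_z(\tau\bar\tau))(z')| \lesssim |z'-z|^{|\tau|+|\bar\tau|}$ (in the relevant regime), and hence the required integrated bound against $\varphi_z^\lambda$. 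The case $\DD_i\tau$ is immediate by integration by parts against $\varphi_z^\lambda$, which costs exactly one factor of $\lambda^{-1}$.

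The main obstacle is the inductive step for $\CI(\tau)$, which is the genuine Schauder-type estimate. Using the decomposition $K = \sum_{n\ge 0} K_n$ with the bounds \eqref{e:propKn} of order $\beta = 2$, I would split $\langle \Pi_z \CI(\tau),\varphi_z^\lambda\rangle$ into contributions with $2^{-n} \le \lambda$ and $2^{-n} > \lambda$. For the large-scale part one uses the inductive bound on $\Pi_z \tau$ tested against the rescaled kernel $K_n$ convolved with $\varphi_z^\lambda$, which remains in a rescaled version of $\CB_r$ up to an explicit factor. For the small-scale part the Taylor subtraction in \eqref{e:defPi3} is essential: each subtracted term contributes with a factor of $|z'-z|^{|k|}$ times a derivative of $K_n$ against $\Pi_z\tau$, and summing the resulting geometric series in $n$ yields exactly the exponent $|\tau|+2$. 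The condition $|\tau|+2 \notin \N$ is not needed here because the number of Taylor terms is fixed by the integer part of $|\tau|+2$ and we never try to gain more regularity than is available. Finally, for the $\Gamma$-bound, the structural identity \eqref{e:propDelta} gives
\begin{equ}
\Gamma_{zz'}\tau - \tau = \sum_i c_i^\tau\, \gamma_{zz'}(\sigma_i)\, \tau_i\;,
\end{equ}
with $|\tau_i|<|\tau|$ and $|\sigma_i| = |\tau|-|\tau_i|$. An auxiliary induction (running in parallel with the one above) on the homogeneity of generators $\CJ_k(\tau)$ of $T_+$ shows $|\gamma_{zz'}(\CJ_k(\tau))| \lesssim |z-z'|^{|\tau|+2-|k|}$: this is precisely the content of testing $\Pi_z \CI(\tau) - \Pi_{z'}\CI(\tau)$ and its derivatives at $z'$, so it reduces to the Schauder bound just proved. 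Plugging in and using $\beta = |\tau_i|$ yields $\|\Gamma_{zz'}\tau\|_\beta \lesssim |z-z'|^{|\tau|-\beta}\|\tau\|$, closing the induction.
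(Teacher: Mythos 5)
Your proposal follows essentially the same three-part structure as the paper's (very terse) proof: the algebraic identities hold by construction, the bound on $\Pi_z\tau$ is verified recursively through \eqref{e:defPi} with the Schauder-type step for $\CI(\tau)$ being the main work, and the bound on $\Gamma_{zz'}$ is reduced via \eqref{e:propDelta} to the bound $|\gamma_{zz'}(\sigma)| \lesssim |z-z'|^{|\sigma|}$. Your elaboration of the first two points is sound and consistent with what the paper asserts in one line each.

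Where I would push back is on your final paragraph. You claim that the bound $|\gamma_{zz'}(\CJ_k(\tau))| \lesssim |z-z'|^{|\tau|+2-|k|}$ ``reduces to the Schauder bound just proved,'' because it is ``precisely the content of testing $\Pi_z\CI(\tau) - \Pi_{z'}\CI(\tau)$ and its derivatives at $z'$.'' This is where the paper explicitly says the opposite: it singles out precisely this bound as ``the only non-trivial fact'' and defers the proof to an external result (Prop.~8.27 in the cited reference) rather than deriving it from the $\Pi$-bounds. The reason your reduction is not immediate is that $\gamma_{zz'} = f_z^{-1}\circ f_{z'}$ and $f_z^{-1}$ acts via the antipode $\CA$, so $\gamma_{zz'}(\CJ_k(\tau))$ is computed by $(f_z^{-1}\otimes f_{z'})\Delta^+\CJ_k(\tau)$, which contains a sum of cross terms coming from the coproduct, not merely the difference $f_{z'}(\CJ_k(\tau)) - f_z(\CJ_k(\tau))$ that corresponds to ``comparing Taylor expansions around $z$ and $z'$.'' Handling the antipode and establishing that all these cross terms organize into the stated power of $|z-z'|$ requires a separate recursive argument over the Hopf-algebraic structure. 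So your sketch has the right geometric intuition but skips the step that the paper views as the heart of the lemma. A smaller issue: your claimed pointwise bound $|(\Pi_z\tau)(z')| \lesssim |z'-z|^{|\tau|\wedge 0}$ for negative homogeneity reads oddly as stated (for a smooth function the right-hand side blows up as $z'\to z$); what you mean, and what is true, is that $\Pi_z\tau$ is locally bounded uniformly in $z$ and hence satisfies this bound on $|z-z'|\le 1$, so the products do combine as you say, but this should be phrased as a uniform bound rather than a scaling bound.
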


\begin{proof}
The algebraic constraints \eqref{e:alg} are satisfied essentially by definition. The first bound
of \eqref{e:ana} can easily be verified recursively by \eqref{e:defPi}. The only non-trivial fact is that
the matrix elements of $\Gamma_{zz'}$ satisfy the right bound. If one can show that
$|\gamma_{zz'}(\sigma)| \lesssim |z-z'|^{|\sigma|}$, this in turn follows from \eqref{e:propDelta}.
This bound is non-trivial and was obtained in \cite[Prop.~8.27]{Regularity}.
\end{proof}

\subsection{Admissible models}

Thanks to Lemma~\ref{lem:lift}, we now have a large class of models for the regularity structures built in the
previous two subsections. However, we do not want to restrict ourselves to this class (or even its closure). The
reason is that if we define products in the ``na\"\i ve'' way given by the second identity in \eqref{e:defPi2}, then
there will typically be some situations where the result diverges as we let $\eps \to 0$ in $\xi_\eps$.
Therefore, we do not impose this relation in general but rather view it as the \textit{definition} of the product, i.e.\ we
interpret it as
\begin{equ}
\bigl(\Pi_z \tau\bigr)\cdot \bigl(\Pi_z \bar \tau\bigr) \eqdef \Pi_z(\tau \bar \tau)\;.
\end{equ}
However, the remainder of the structure described in \eqref{e:defPi} is required for $X_i$, $\DD_i$ and $\CI$ to
have the correct interpretation. This motivates the following definition.

\begin{definition}
Given a regularity structure $\TT$ constructed as in Sections~\ref{sec:spaces} and \ref{sec:group}, we say that a model
$(\Pi,\Gamma)$ is \textit{admissible} if it satisfies $\bigl(\Pi_z X_i\bigr)(z') = z_i' - z_i$,
$\Pi_z \DD_i\tau = \partial_i \Pi_z \tau$, as well as \eqref{e:defPi3} and if furthermore $\Gamma_{zz'} = \hat \Gamma_{f_z}^{-1}\hat \Gamma_{f_{z'}}$ with $f_z$ given by \eqref{e:deffz}. We will denote the space of all admissible models by $\MM_0 \subset \MM$. 
\end{definition}

\begin{remark}
Note that the definition of ``admissible'' requires us to fix a kernel $K$ which should be represented by $\CI$.
\end{remark}

\begin{remark}\label{rem:PPi}
In the particular case of admissible models for a regularity structure of the type considered here,
the data of the single linear map $\PPi$ as above is sufficient to reconstruct the full model $(\Pi,\Gamma)$.
\end{remark}

Note that at this stage, it is not clear whether this concept is even well-defined: 
in general, $D^{(k)}K \star \Pi_z \tau$ will be a distribution and cannot be evaluated at fixed points,
so \eqref{e:deffz} might be meaningless for a general model. It turns out that the definition actually
always makes sense, provided that the second identity in \eqref{e:deffz} is interpreted as
\begin{equ}
f_z(\CJ_k(\tau)) = \sum_{n \ge 0} \bigl(D^{(k)}K_n \star \Pi_z \tau\bigr)(z)\;,
\end{equ}
where $K = \sum_{n \ge 0} K_n$ as in \eqref{e:propKn}. This is because the bound \eqref{e:propKn}, combined
with the bound \eqref{e:ana} and the fact that $K_n$ is supported in the ball of radius $2^{-n}$ imply that
\begin{equ}
\bigl|\bigl(D^{(k)}K_n \star \Pi_z \tau\bigr)(z)\bigr| \lesssim 2^{(|k|-|\tau|-2)n}\;.
\end{equ}
The condition $|k| < |\tau| + 2$ appearing in \eqref{e:defPi3} is then precisely what is required to guarantee that this is always 
summable.

\subsection{Abstract fixed point problem}

We now show how to reformulate a stochastic PDE as a fixed point problem in some space $\CD^\gamma$ based
on an admissible model for the regularity structure associated to the SPDE by the construction of
Section~\ref{sec:spaces}. For definiteness, we focus on the example of the KPZ equation \eqref{e:KPZ},
but all other examples mentioned in the introduction can be treated in virtually the same way. Writing $P$ for the heat kernel,
the mild formulation of \eqref{e:KPZ} is given by
\begin{equ}[e:KPZint]
h = P \star \one_{t > 0}\bigl((\partial_x h)^2 + \xi\bigr) + Ph_0\;,
\end{equ}
where we write $Ph_0$ for the harmonic extension of $h_0$. (This is just the solution 
to the heat equation with initial condition $h_0$.) In order to formulate this as a fixed point 
problem in $\CD^\gamma$ for a suitable value of $\gamma > 0$, we will make use of the following
far-reaching extension of Schauder's theorem.

\begin{theorem}\label{thm:Schauder}
Fix one of the regularity structures built in the previous section and fix an admissible model. 
Then, for all but a discrete set of values of $\gamma > 0$, there exists a continuous operator
$\CP \colon \CD^\gamma \to \CD^{\gamma+2}$ such that the identity
\begin{equ}[e:propP]
\CR \CP f = P \star \CR f\;,
\end{equ}
holds for every $f \in \CD^\gamma$. Furthermore, one has $\bigl(\CP f\bigr)(z) - \CI f(z) \in \bar T$.
\end{theorem}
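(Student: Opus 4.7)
The natural approach is to construct $\CP f$ explicitly, using the decomposition $\CG = K + R$ of the heat kernel into its singular truncated part $K$ (of order $2$) and a smooth remainder $R$, and then verify the three required properties. Guided by the admissibility relation \eqref{e:defPi3}, I would set
\begin{equ}
(\CP f)(z) \eqdef \CI f(z) + J(z) f(z) + (\CN_\gamma f)(z)\;,
\end{equ}
where, for homogeneous $\tau$,
\begin{equ}
J(z) \tau \eqdef \sum_{|k| < |\tau|+2} \frac{X^k}{k!}\, \bigl(D^{(k)} K \star \Pi_z \tau\bigr)(z)\;,
\end{equ}
and
\begin{equ}
(\CN_\gamma f)(z) \eqdef \sum_{|k| < \gamma+2} \frac{X^k}{k!}\Bigl( \bigl(D^{(k)} K \star (\CR f - \Pi_z f(z))\bigr)(z) + \bigl(D^{(k)} R \star \CR f\bigr)(z)\Bigr)\;.
\end{equ}
The symbols $X^k$ live in $\bar T \subset T$ by the construction of Section~\ref{sec:spaces}, so $(\CP f)(z) - \CI f(z) \in \bar T$ is built in by definition, which immediately gives the last statement of the theorem. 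The role of $J(z)$ is precisely to compensate the Taylor polynomial subtracted in \eqref{e:defPi3}, while $\CN_\gamma f$ corrects for the fact that we want $\CR f$ rather than $\Pi_z f(z)$ as the input to the convolution, and adds in the smooth contribution from $R$.

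Next I would verify $\CR \CP f = P \star \CR f$ using the uniqueness part of Theorem~\ref{theo:reconstruction} (valid since $\gamma > 0$, hence $\gamma+2 > 0$). It suffices to show
\begin{equ}
\bigl| \bigl(P \star \CR f - \Pi_z \CP f(z)\bigr)(\varphi_z^\lambda)\bigr| \lesssim \lambda^{\gamma+2}\;,
\end{equ}
uniformly in $\lambda \in (0,1]$ and $\varphi \in \CB_r$. Unfolding the definitions, the quantity in question is the remainder of a Taylor expansion at order $\gamma+2$ of the smooth function $R \star \CR f$ plus the remainder of the singular convolution $K \star (\CR f - \Pi_z f(z))$ after subtracting its Taylor polynomial of degree below $\gamma+2$. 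The smooth piece is handled by classical Taylor's theorem, whereas the singular piece is controlled by splitting $K = \sum_n K_n$, using \eqref{e:propKn} together with the reconstruction bound \eqref{e:bound} on $\CR f - \Pi_z f(z)$, and summing over scales $n$ with $2^{-n} \gtrsim \lambda$ and $2^{-n} \lesssim \lambda$ separately; the exponent condition $|k| < \gamma+2$ is exactly what is needed for the scale sums to be convergent and yield the prefactor $\lambda^{\gamma+2}$.

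The main obstacle is the first claim: that $\CP f$ actually belongs to $\CD^{\gamma+2}$, i.e.\ that
\begin{equ}
\|\CP f(z) - \Gamma_{zz'} \CP f(z')\|_\alpha \lesssim |z-z'|^{\gamma+2-\alpha}
\end{equ}
for each $\alpha < \gamma+2$. I would handle this by decomposing the difference according to the three summands of $\CP$. The $\CI f$ term transforms cleanly under $\Gamma_{zz'}$ because of the compatibility of abstract integration with admissible models (the map $\CI$ commutes with the structure group up to polynomial corrections that are precisely absorbed into $J$). The $J$ term requires expressing the difference $J(z) f(z) - \Gamma_{zz'}\bigl(J(z')f(z')\bigr)$ using the identity \eqref{e:deffz} and the definition of $\Gamma$, then doing the same multi-scale analysis as above on each component $K_n$ and summing against the bounds $\|f(z) - \Gamma_{zz'} f(z')\|_\beta \lesssim |z-z'|^{\gamma-\beta}$ coming from $f \in \CD^\gamma$. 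The $\CN_\gamma f$ term is handled by similar but more delicate multiscale bounds, using the reconstruction estimate \eqref{e:bound} to control $\CR f - \Pi_z f(z)$ at each scale. The discrete exceptional set of $\gamma$ arises from two sources: values with $\gamma+2 \in \N$, where the polynomial thresholds in $J$ or $\CN_\gamma$ become critical (the usual Schauder borderline), and values for which $\gamma + 2$ coincides with $|\tau|+2$ for some basis symbol $\tau \in \CV$, where the cut-off $|k| < |\tau|+2$ in $J$ becomes unstable; away from these finitely many thresholds in any bounded window, the sums over scales converge and yield the claimed continuous operator.
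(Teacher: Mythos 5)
Your construction of $\CP$ is precisely the one used in the reference the paper cites for this result (\cite[Sec.~5]{Regularity}): there the operator is written $\CK_\gamma = \CI + J + \CN_\gamma$ with exactly your three summands, and the lift of the smooth remainder $R = P - K$ is added on top (here folded into your $\CN_\gamma$ term), while the verification proceeds by the same route you outline — uniqueness of $\CR$ from Theorem~\ref{theo:reconstruction}, the dyadic decomposition $K = \sum_n K_n$ with the bound \eqref{e:propKn}, and the reconstruction estimate \eqref{e:bound} to control the scale sums. Your identification of the exceptional set (integer values of $\gamma+2$ and of $|\tau|+2$ for $\tau$ appearing in $T_{<\gamma}$, which destabilise the polynomial truncations in $J$ and $\CN_\gamma$) also matches the assumption made there, so the proposal is correct and follows essentially the same approach.
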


\begin{remark}
Recall that $\bar T \subset T$ denotes the linear span of the $X^k$, which represent the usual
Taylor polynomials. Again, while $\CP$ is a linear map when we consider the underlying model as fixed,
it can (and should) also be viewed as a continuous nonlinear map from $\MM_0 \ltimes \CD^\gamma$ into $\MM_0 \ltimes \CD^{\gamma+2}$.
The reason why some values of $\gamma$ need to be excluded is essentially the same as for the 
usual Schauder theorem. 
\end{remark}

For a proof of Theorem~\ref{thm:Schauder} and a precise description of the operator $\CP$, see \cite[Sec.~5]{Regularity}.
With the help of the operator $\CP$, it is then possible to reformulate \eqref{e:KPZint} as the following
fixed point problem in $\CD^\gamma$, provided that we have an admissible model at our disposal:
\begin{equ}[e:KPZFP]
H = \CP \one_{t > 0}\bigl((\DD H)^2 + \Xi\bigr) + Ph_0\;.
\end{equ}
Here, the smooth function $P h_0$ is interpreted as an element in $\CD^\gamma$ with values in $\bar T$
via its Taylor expansion of 
order $\gamma$. Note that in the context of the regularity structure associated to the KPZ equation
in Section~\ref{sec:spaces}, the right hand side of this equation makes sense for every 
$H\in \CD^\gamma$, provided that $H$ takes values
in $T_\CU$. This is an immediate consequence of the property \eqref{e:UVKPZ}. 

\begin{remark}
As already mentioned earlier, we cheat here in the sense that $\CD^\gamma$ should really be replaced by a
space $\CD^{\gamma,\eta}$ allowing for a suitable singular behaviour on the hyperplane $t = 0$.
\end{remark}

It is also possible to show (see \cite[Thm~4.7]{Regularity}) that if we set $|\Xi| = -{3\over 2} - \kappa$ for
some sufficiently small $\kappa > 0$, then one has $(\DD H)^2 \in \CD^{\gamma - {3\over 2} - \kappa}$ for
$H \in \CD^\gamma$. As a consequence, we expect to be able to find local solutions to the fixed point problem
\eqref{e:KPZFP}, provided that we formulate it in $\CD^\gamma$ for $\gamma > {3\over 2} + \kappa$.
This is indeed the case, and a more general instance of this fact can be found in \cite[Thm~7.8]{Regularity}.
Furthermore, the local solution is locally Lipschitz continuous as a function of both the initial condition
$h_0$ and the underlying admissible model $(\Pi,\Gamma) \in \MM_0$.
 
Now that we have a local solution $H \in \CD^\gamma$ for \eqref{e:KPZFP}, we would like to know how this 
solution relates to the original problem \eqref{e:KPZ}. This is given by the following simple fact:

\begin{proposition}\label{prop:PDE}
If the underlying model $(\Pi,\Gamma)$ is built from a smooth function $\xi_\eps$ as in \eqref{e:defPi}
and if $H$ solves \eqref{e:KPZFP}, then $\CR H$ solves \eqref{e:KPZint}.
\end{proposition}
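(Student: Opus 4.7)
My plan is to apply the reconstruction operator $\CR$ to both sides of the abstract equation \eqref{e:KPZFP} and use the structural properties of $\CR$, together with the special form of the model $(\Pi,\Gamma)$ built from a smooth $\xi_\eps$, to turn each term into its classical counterpart in \eqref{e:KPZint}. Concretely, linearity of $\CR$ (for a fixed model) and Theorem~\ref{thm:Schauder} give
\begin{equ}
\CR H = \CR \CP \one_{t>0}\bigl((\DD H)^2 + \Xi\bigr) + \CR (Ph_0) = P \star \CR\bigl(\one_{t>0}((\DD H)^2 + \Xi)\bigr) + \CR(Ph_0)\;,
\end{equ}
so it suffices to identify the individual reconstructions on the right hand side.

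The crucial input is that, for the smooth admissible model defined by \eqref{e:defPi}, every $\Pi_z \tau$ is a continuous function and the reconstruction is given pointwise by $(\CR f)(z) = \bigl(\Pi_z f(z)\bigr)(z)$, as noted in the remark following Theorem~\ref{theo:reconstruction}. First, $Ph_0$ is smooth, and when lifted to $\CD^\gamma$ via its Taylor expansion it has the form $f(z) = \Gamma_{z0}\tau_0$ discussed in a remark above, so $\CR(Ph_0) = Ph_0$. Second, since $\Xi$ is viewed as a constant element of $T$ (entering \eqref{e:KPZFP} through the right-hand side after applying $\CP$ to $\one_{t>0}\Xi$), the identity $(\Pi_z \Xi)(z) = \xi_\eps(z)$ from \eqref{e:defPi1} gives $\CR(\one_{t>0}\Xi) = \one_{t>0}\xi_\eps$.

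The main content is to show $\CR((\DD H)^2) = (\partial_x \CR H)^2$. I would do this in two substeps. From $\Pi_z \DD_i \tau = \partial_i \Pi_z \tau$ in \eqref{e:defPi2} and the pointwise formula above, one checks directly that $\CR(\DD H) = \partial_x \CR H$. Then, using that $H$ takes values in $T_\CU$ so that the product $(\DD H)^2$ is the genuine abstract product in $T$ (no truncation drops a term that would be visible at the level of $\Pi_z$ evaluated at $z$, since for smooth models such higher-degree contributions to $(\Pi_z \tau)(z)$ all coincide with the corresponding classical product), the multiplicativity $\Pi_z(\tau \bar\tau) = (\Pi_z\tau)\cdot(\Pi_z\bar\tau)$ from \eqref{e:defPi2} gives
\begin{equ}
\bigl(\CR((\DD H)^2)\bigr)(z) = \bigl(\Pi_z (\DD H(z))^2\bigr)(z) = \bigl((\Pi_z \DD H(z))(z)\bigr)^2 = (\partial_x \CR H)^2(z)\;.
\end{equ}
Combining these identities with the fact that multiplication by the time indicator $\one_{t>0}$ commutes with $\CR$ in this setting yields exactly \eqref{e:KPZint}.

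The main obstacle I anticipate is the second substep: justifying that $\CR$ respects the abstract product $(\DD H)^2$, since in general the multiplication in $\CD^\gamma$ involves a projection onto components of degree less than $\gamma$. The way around it is precisely that the reconstruction of a smooth model is given by $(\Pi_z \cdot)(z)$, and $\Pi_z$ is an algebra morphism on the subalgebra of $T$ generated by $\CU$; truncating higher-homogeneity components does not affect the value of $(\Pi_z \tau)(z)$ in the combinations that enter here, because the discarded terms all vanish when evaluated at $z$ (they are of the form $\Pi_z \tau'$ with $\tau' \in T$ of homogeneity at least $\gamma$, and their values at $z$ are encoded in the Taylor-like subtraction built into the admissible model). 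With these commutation properties in hand, the proof reduces to applying $\CR$ termwise and reading off the classical equation.
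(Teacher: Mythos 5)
Your proof is correct and follows essentially the same route as the paper's: apply $\CR$ to \eqref{e:KPZFP}, commute $\CR$ past $\CP$ via \eqref{e:propP}, and then use the pointwise formula \eqref{e:formulaRf} together with the multiplicativity and derivative rules in \eqref{e:defPi2} to identify $\CR\bigl((\DD H)^2\bigr)$ with $(\partial_x \CR H)^2$. You supply more detail than the paper's one-line ``it is not difficult to see'', in particular flagging and resolving the subtlety that abstract multiplication in $\CD^\gamma$ truncates to $T_{<\gamma}$ (the discarded positive-homogeneity pieces evaluate to zero at the base point for a smooth admissible model), which is exactly the point the paper is implicitly relying on.
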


\begin{proof}
As a consequence of \eqref{e:propP}, we see that $\CR H$ solves
\begin{equ}
\CR H = P \star \one_{t > 0}\bigl(\CR\bigl((\DD H)^2\bigr) + \xi_\eps\bigr) + Ph_0\;.
\end{equ}
Combining \eqref{e:defPi2} with \eqref{e:formulaRf}, it is not difficult to see that in this
particular case, one has $\CR\bigl((\DD H)^2\bigr) = (\partial_x \CR H)^2$, so that the claim follows.
\end{proof}

The results of the previous subsection yield a robust solution theory for \eqref{e:KPZFP}
which projects down (via $\CR$) to the usual solution theory for \eqref{e:KPZ} for smooth
driving noise $\xi_\eps$. If it were the case that the sequence of models $(\Pi^{(\eps)}, \Gamma^{(\eps)})$
associated to the regularised noise $\xi_\eps$ via \eqref{e:defPi} converges to a limit in $\MM_0$,
then this would essentially conclude our analysis of \eqref{e:KPZ}.

Unfortunately, this is \textit{not} the case. Indeed, in all of the examples mentioned in the introduction 
except for \eqref{e:Burgers}, the sequence of models $(\Pi^{(\eps)}, \Gamma^{(\eps)})$ does not converge
as $\eps \to 0$. In order to remedy to this situation, the idea is to look for a sequence of ``renormalised''
models $(\hat \Pi^{(\eps)}, \hat \Gamma^{(\eps)})$ which are also admissible and also satisfy
$\hat \Pi^{(\eps)}_z \Xi = \xi_\eps$, but do converge to a limit as $\eps \to 0$. The last section of this article
shows how these renormalised models can be constructed.

\subsection{Renormalisation}
\label{sec:renorm}

In order to renormalise our model, we will build a very natural group of continuous
transformations of $\MM_0$ that build a new admissible model from an old one. The renormalised model
will then be the image of the ``canonical'' model $(\Pi^{(\eps)}, \Gamma^{(\eps)})$ under a (diverging)
sequence of such transformations. 
Since we want the new model to also be admissible, the only defining property that we are allowed to
modify in \eqref{e:defPi} is the definition of the product. In order to describe the renormalised model, 
it turns out to be more convenient to consider again its representation by a single linear map
${\hat \PPi}^{(\eps)} \colon T \to \CS'$ as in \eqref{sec:group}, which is something we can do 
by Remark~\ref{rem:PPi}.

At this stage, we do not appear to have much choice: the only ``reasonable'' way of building 
$\hat \PPi^{(\eps)}$ from $\PPi^{(\eps)}$ is to compose it to the right with some fixed linear map $M_\eps \colon T \to T$:
\begin{equ}[e:renorm]
\hat \PPi^{(\eps)} = \PPi^{(\eps)} M_\eps\;.
\end{equ}
If we do this for an arbitrary map $M_\eps$, we will of course immediately lose the algebraic and analytical
properties that allow to associate an admissible model $(\hat \Pi^{(\eps)}, \hat \Gamma^{(\eps)})$ to the 
map $\hat \PPi^{(\eps)}$. As a matter of fact, it is completely unclear \textit{a priori} whether there exists \textit{any}
non-trivial map $M_\eps$ that preserves these properties. Fortunately, these maps do exists and a somewhat
indirect characterisation of them can be found in \cite[Sec.~8]{Regularity}. Even better, there are sufficiently
many of them so that the divergencies of $\PPi^{(\eps)}$ can be compensated by a judicious choice of $M_\eps$.

Let us just illustrate how this plays out in the case of the KPZ equation already studied in the
last subsection.
In order to simplify notations, we now use the following shorthand graphical 
notation for elements of $\CU \cup \CV$. For $\Xi$, we draw a small circle.
The integration map $\CI$ is then represented by a downfacing wavy line 
and $\DD\CI$ is represented by a downfacing plain line. The multiplication of 
symbols is obtained by joining them at the root. For example, we have
\begin{equ}
(\DD\CI(\Xi))^2 = \<2>\;,\qquad
(\DD\CI(\DD\CI(\Xi)^2))^2 = \<40>\;,\qquad
\CI(\DD\CI(\Xi)^2) = \<2d>\;.
\end{equ}
In the case of the KPZ equation, 
it turns out that one can exhibit an explicit four-parameter group of matrices $M$
which preserve admissible models when used in \eqref{e:renorm}.
These matrices are of the form $M = \exp(- \sum_{i=0}^3 C_i L_i)$, where the generators $L_i$ 
are determined by the following contraction rules:
\begin{equ}[e:defLL]
L_0 \colon \<11> \mapsto \one\;,\qquad L_1 \colon \<2> \mapsto \one\;,\qquad L_2 \colon \<40> \mapsto \one \qquad L_3 \colon \<211> \mapsto \one \;.
\end{equ}
This should be understood in the sense that if $\tau$ is an arbitrary formal expression,
then $L_0 \tau$ is the sum of all formal expressions obtained from $\tau$ by performing
a substitution of the type $\<11> \mapsto \one$. For example, one has
\begin{equ}
L_0 \<21> = 2 \<1>\;,\qquad L_0 \<211> = 2 \<11> + \<20>\;,
\end{equ}
etc. The extension of the other operators $L_i$ to all of $T$ proceeds in principle along the same 
lines. However, as a consequence of the fact that $\CI(\one) = 0$ by construction,
it actually turns out that $L_i \tau = 0$ for $i \neq 0$ and every $\tau$ for which 
$L_i$ wasn't already defined in \eqref{e:defLL}. We then have the following result, which is
a consequence of \cite[Sec.~8]{Regularity} and \cite{KPZJeremy} and was implicit in \cite{KPZ}:

\begin{theorem}
Let $M_\eps$ be given as above, let $\PPi^{(\eps)}$ be constructed from $\xi_\eps$ as in
\eqref{e:defPPi}, and let $\hat \PPi^{(\eps)} = \PPi^{(\eps)} M_\eps$. Then, there 
exists a unique admissible model $(\hat \Pi^{(\eps)}, \hat \Gamma^{(\eps)})$ such that 
$\hat \Pi_z^{(\eps)} = \hat \PPi^{(\eps)} \hat F_z^{(\eps)}$, where $\hat F_z^{(\eps)}$
relates to $\hat \Pi_z^{(\eps)}$ as in \eqref{e:deffz}. Furthermore, one has the identity
\begin{equ}[e:relrenorm]
\bigl(\hat \Pi_z^{(\eps)} \tau \bigr)(z) = \bigl(\Pi_z^{(\eps)} M_\eps \tau \bigr)(z)\;.
\end{equ}
Finally, there is a choice of $M_\eps$ such that $(\hat \Pi^{(\eps)}, \hat \Gamma^{(\eps)})$
converges to a limit $(\hat \Pi, \hat \Gamma)$ which is universal in that it does not depend on the
details of the regularisation procedure.
\end{theorem}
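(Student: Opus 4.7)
My plan is to split the proof into three parts: first, the algebraic fact that the four-parameter family generated by $\exp(-\sum_i C_i L_i)$ stabilises the space of admissible models; second, the pointwise identity \eqref{e:relrenorm}; and third, the analytic convergence as $\eps\to 0$, which is where the main obstacle lies.

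For the first part I would construct $(\hat\Pi^{(\eps)},\hat\Gamma^{(\eps)})$ by imposing admissibility as a \emph{definition}: starting from $\hat\PPi^{(\eps)} = \PPi^{(\eps)}M_\eps$ on $\Xi$ and the $X_i$, I propagate through the formal expressions using \eqref{e:defPi3} and then set $\hat\Gamma_{zz'}^{(\eps)} = \hat F_{\hat f_z^{(\eps)}}^{-1}\hat F_{\hat f_{z'}^{(\eps)}}$ via \eqref{e:deffz} with $\Pi$ replaced by $\hat\Pi$. Uniqueness is forced by Remark~\ref{rem:PPi}. The nontrivial point is that the resulting pair obeys the axioms of Definition~\ref{def:model}; by the abstract criterion developed in \cite[Sec.~8]{Regularity} this reduces to checking a compatibility of each generator $L_i$ with the coproduct $\Delta$ from \eqref{e:propDelta}. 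One verifies this directly on the four formal expressions in \eqref{e:defLL}, using $\Delta\one=\one\otimes\one$ and the fact that every $L_i$ acts by contracting a strict subtree and therefore commutes with the ``upper part'' of $\Delta$. The generators commute and are nilpotent, so $M_\eps$ is well defined as a triangular invertible map.

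The identity \eqref{e:relrenorm} then follows by induction on the number of symbols in $\tau$. Unwinding the recursion gives $\hat\Pi_z^{(\eps)}\tau = \PPi^{(\eps)}M_\eps\hat F_z^{(\eps)}\tau$, so the difference $\hat\Pi_z^{(\eps)}\tau - \PPi^{(\eps)}M_\eps\tau$ comes entirely from the Taylor remainder in \eqref{e:defPi3} applied to $\CI$-brackets inside $M_\eps\tau$, together with polynomial corrections proportional to $(\cdot - z)^k$ with $|k|\geq 1$ contributed by the strictly upper-triangular part of $\hat F_z^{(\eps)} - I$. Both families of corrections vanish identically at $z'=z$.

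The analytic third step is the technical heart of the theorem. For each $\tau\in\CU\cup\CV$ the random distribution $\hat\Pi_z^{(\eps)}\tau$ belongs to a finite direct sum of Wiener chaoses of order at most the number of occurrences of $\Xi$ in $\tau$. Wick's formula expresses its testing against $\varphi_z^\lambda$ as an iterated integral over a Feynman-type graph with edges given by $K_\eps$ or derivatives thereof, and one expects the bound
\begin{equ}
\E\bigl|\bigl(\hat\Pi_z^{(\eps)}\tau\bigr)(\varphi_z^\lambda)\bigr|^2 \lesssim \lambda^{2|\tau|}
\end{equ}
uniformly in $\eps$ by power-counting with \eqref{e:propKn}. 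For every $\tau\in\CU\cup\CV$ \emph{outside} the list \eqref{e:defLL} the power-counting already yields a finite integral as $\eps\to 0$; for the four exceptional symbols the zeroth-chaos component (i.e.\ the expectation) is logarithmically or polynomially divergent, and via \eqref{e:relrenorm} the constants $C_i(\eps)$ must be chosen to subtract exactly these divergent expectations. Once this subtraction is in place, a Cauchy argument comparing two regularisations $\xi_\eps$ and $\xi_{\eps'}$, combined with equivalence of moments on a fixed Wiener chaos and the Kolmogorov-type criterion of Theorem~\ref{theo:scaling}, upgrades the $L^2$ bound to convergence in probability in $\MM$; the corresponding bounds on $\hat\Gamma^{(\eps)}$ follow from the techniques of \cite[Prop.~8.27]{Regularity}. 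Universality is inherited from the fact that two admissible mollifiers produce renormalisation constants that differ only by quantities which remain bounded as $\eps\to 0$, so any such shift can be absorbed into the finite part of $M_\eps$ and leaves the limit unchanged up to an inessential reparameterisation. The hardest part is organising the Wiener-chaos expansions so that the divergences fall into exactly the four slots spanned by $L_0,\ldots,L_3$, and establishing the power-counting bound symbol by symbol.
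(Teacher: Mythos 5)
Your three-part decomposition (algebraic stability of admissibility under the renormalisation group; the pointwise identity \eqref{e:relrenorm}; Wiener-chaos power-counting for convergence) is exactly the structure of the proof in the cited references \cite[Sec.~8]{Regularity} and \cite{KPZJeremy}, which the paper itself does not reproduce. The overall plan is sound, and the argument you give for \eqref{e:relrenorm} --- that both the Taylor remainders in \eqref{e:defPi3} and the strictly lower-triangular part of $\hat F_z^{(\eps)}$ contribute only terms vanishing at $z'=z$, so that $\hat\Pi_z^{(\eps)}\tau$ and $\Pi_z^{(\eps)}M_\eps\tau$ agree on the diagonal --- is the right mechanism, and the delegation of the estimate on $\hat\Gamma^{(\eps)}$ to \cite[Prop.~8.27]{Regularity} is also what the original proof does.

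Two points need repair. First, your criterion for $M_\eps$ to preserve admissibility --- ``$L_i$ commutes with the upper part of $\Delta$'' --- is not the actual condition and would not survive a careful check: the genuine requirement from \cite[Sec.~8.3]{Regularity} is the \emph{cointeraction} property, namely the existence of auxiliary maps $\hat M_\eps \colon T_+ \to T_+$ and $\Delta^{M_\eps} \colon T \to T \otimes T_+$ satisfying $\bigl(M_\eps \otimes \hat M_\eps\bigr)\Delta = \Delta^{M_\eps}M_\eps$ and a compatible relation with $\Delta^+$; this is strictly stronger than any commutation statement and is precisely what makes the new $\hat\Gamma_{zz'}^{(\eps)}$ belong to $G$ and scale correctly. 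It so happens that for the four generators $L_0,\dots,L_3$ these maps exist, but verifying that is the actual content of the algebraic step and must be done against the cointeraction condition, not a heuristic commutation. Second, you assert that all four exceptional symbols have divergent expectation. This is false for $\<11> = \CI(\Xi)\,\DD\CI(\Xi)$: its expectation is $\int K_\eps \,\partial_x K_\eps = \tfrac12\int \partial_x(K_\eps^2) = 0$ by parity, which is exactly why $C_0^{(\eps)} = 0$ in the paper's Remark. The generator $L_0$ is present not to kill a divergence but because it is needed to generate the full renormalisation group acting on admissible models (it implements a Galilean shift, visible in the $-4C_0^{(\eps)}\partial_x h$ term of \eqref{e:renormKPZ}). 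Only $C_1^{(\eps)}$, $C_2^{(\eps)}$, $C_3^{(\eps)}$ diverge, and a proof that tries to fix $C_0^{(\eps)}$ by subtracting a (nonexistent) divergence of $\<11>$ would be attacking a vacuous problem. Apart from these, your universality argument is correct in spirit but would need to be sharpened: one must show that the \emph{finite} parts $c_1,\dots,c_4$ can be chosen so that the limit matches that of a reference mollifier, not merely that the divergent parts agree.
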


\begin{remark}
Despite \eqref{e:relrenorm}, it is \textit{not} true in general that $\hat \Pi_z^{(\eps)} = \Pi_z^{(\eps)}M_\eps$.
The point is that \eqref{e:relrenorm} only holds at the point $z$ and not at $z' \neq z$.
\end{remark}

In order to complete our survey of Theorem~\ref{theo:main}, it remains to identify the solution
to \eqref{e:KPZFP} with respect to the renormalised model $(\hat \Pi^{(\eps)}, \hat \Gamma^{(\eps)})$
with the classical solution to some modified partial differential equation. The continuity of the abstract
solution map then immediately implies that the solutions to the modified PDE converge to a limit.
The fact that the limiting model $(\hat \Pi, \hat \Gamma)$ is universal also implies that this limit is universal.

\begin{theorem}
Let $M_\eps = \exp(- \sum_{i=0}^3 C_i^{(\eps)} L_i)$ be as 
above and let $(\hat \Pi^{(\eps)}, \hat \Gamma^{(\eps)})$ be the corresponding renormalised model.
Let furthermore $H$ be the solution to \eqref{e:KPZFP} with respect to this model. Then, the function
$h(t,x) = \bigl(\CR H\bigr)(t,x)$ solves the equation
\begin{equ}[e:renormKPZ]
\partial_t h = \partial_x^2 h + (\partial_x h)^2 - 4C_0^{(\eps)}\, \partial_x h + \xi_\eps - (C_1^{(\eps)} + C_2^{(\eps)} + 4C_3^{(\eps)})\;.
\end{equ}
\end{theorem}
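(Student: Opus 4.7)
The plan is to generalise Proposition~\ref{prop:PDE} to the renormalised model, reducing everything to the computation of the correction that the non-triviality of $M_\eps$ introduces at the level of $\CR((\DD H)^2)$, and then to evaluate this correction tree by tree using the rules \eqref{e:defLL}.

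First I would observe that no generator $L_i$ acts non-trivially on $\Xi$ or on any polynomial $X^k$, so the renormalisation preserves both $\hat \Pi^{(\eps)}_z \Xi = \xi_\eps$ and the structural identities of an admissible model involving $X_i$ and $\DD_i$. Since $\xi_\eps$ is smooth, every $\hat \Pi^{(\eps)}_z \tau$ remains a continuous function, so the reconstruction formula \eqref{e:formulaRf} applies to $(\hat \Pi^{(\eps)}, \hat \Gamma^{(\eps)})$. Running the argument of Proposition~\ref{prop:PDE} verbatim then yields $\partial_t h = \partial_x^2 h + \bigl(\CR (\DD H)^2\bigr) + \xi_\eps$, and by \eqref{e:relrenorm} combined with \eqref{e:formulaRf} one has
\begin{equ}
\bigl(\CR (\DD H)^2\bigr)(z) = \bigl(\Pi^{(\eps)}_z\, M_\eps\, (\DD H)^2(z)\bigr)(z)\;.
\end{equ}
The task is therefore to compute the right hand side from the local expansion of $(\DD H)^2(z)$.

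Using the fixed-point equation \eqref{e:KPZFP}, the Taylor-like expansion of $H$ at $z$ takes the form $h(z)\,\one + \<d> + (\partial_x h)(z)\,X_1 + \CI(\<2>) + (\text{strictly higher homogeneity})$, so $\DD H(z)$ contains $\<1>$, $(\partial_x h)(z)\,\one$ and $\DD\CI(\<2>)$ among its leading components. Squaring produces, among the trees of non-positive homogeneity, $\<2>$ (with coefficient $1$), $\<1>$ (with coefficient $2(\partial_x h)(z)$), $\one$ (with coefficient $(\partial_x h)(z)^2$), $\<40>$, trees of type $\<211>$, and trees containing the pattern $\<11>$ coupled to $(\partial_x h)(z)$. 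Applying $M_\eps = I - \sum_{i=0}^3 C_i^{(\eps)} L_i + \tfrac12\bigl(\sum C_i^{(\eps)} L_i\bigr)^2 - \ldots$ and projecting onto $\one$ via the contraction rules \eqref{e:defLL} yields, after applying $\Pi^{(\eps)}_z$ and evaluating at $z$, the counter-terms $-C_1^{(\eps)}$ from $L_1 \<2>$, $-C_2^{(\eps)}$ from $L_2\<40>$, $-4C_3^{(\eps)}$ from $L_3$ acting on trees of type $\<211>$, and $-4C_0^{(\eps)}\,\partial_x h(z)$ from $L_0$ acting on those trees whose coefficient in $(\DD H)^2(z)$ couples $(\partial_x h)(z)$ to an $\<11>$-subgraph. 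All trees on which $M_\eps$ acts trivially and whose homogeneity is strictly positive contribute nothing at $z$ by \eqref{e:formulaRf}, while the $\one$-coefficient $(\partial_x h)^2$ passes through unchanged. Summing the contributions recovers exactly \eqref{e:renormKPZ}.

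The main obstacle is the combinatorial bookkeeping in the previous paragraph: one must enumerate every tree of non-positive homogeneity appearing in $(\DD H)^2(z)$ together with its $\CD^\gamma$-coefficient, and then compute for each generator $L_i$ the multiplicity with which $\one$ appears in $L_i\tau$, including contributions from iterated generators inside $\exp$. The factor $4$ in front of $C_0^{(\eps)}$ arises from summing the distinct ways the contraction $\<11>\mapsto \one$ can act on the cross-products that couple $(\partial_x h)\,\one$ to the fluctuating components of $\DD H$, and the factor $4$ in front of $C_3^{(\eps)}$ arises similarly from the combinatorics of the $\<211>$ trees. The detailed count is carried out in \cite[Sec.~8]{Regularity} and \cite{KPZJeremy}; once it is in place, the theorem follows immediately from the linearity of $\CR$ in its second argument and Proposition~\ref{prop:PDE}.
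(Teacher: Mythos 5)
Your strategy matches the paper's: reduce via \eqref{e:relrenorm} and \eqref{e:formulaRf} to computing $\bigl(\Pi^{(\eps)}_z M_\eps \bigl((\DD H)^2 + \Xi\bigr)(z)\bigr)(z)$ from the tree expansion of $(\DD H)^2(z)$ up to homogeneity zero, then read off the counterterms. The substance of the argument is the same. There is, however, a gap in your justification for why the non-$\one$ trees in the expansion do not themselves contribute at $z$. Your stated criterion — trees of strictly positive homogeneity vanish — is true but vacuous here, since the expansion you work with has already been truncated to homogeneity $\le 0$; in particular $\<1>$, $\<2>$, $\<21>$, $\<20>$, $\<40>$, $\<211>$, $\<11>$ all have negative or near-zero homogeneity yet still satisfy $\bigl(\Pi^{(\eps)}_z\tau\bigr)(z)=0$. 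The reason is structural, not homogeneity-based: each of them is a product of expressions carrying a $\CI$ or $\DD\CI$ at the root, which vanish pointwise at $z$ because of the Taylor subtraction built into the admissibility condition \eqref{e:defPi3}.

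The paper avoids needing this fact tree by tree: it first shows $M_\eps\bigl((\DD H)^2 + \Xi\bigr) = (M_\eps\DD H)^2 + \Xi - 4C_0^{(\eps)}\, M_\eps\DD H - (C_1^{(\eps)} + C_2^{(\eps)} + 4C_3^{(\eps)})$ modulo positive homogeneity, so that multiplicativity of the canonical $\Pi_z^{(\eps)}$, together with $\CR\DD H = \partial_x\CR H$, reconstructs the right hand side directly. Adding the structural observation above, or mimicking the paper's factorisation, would close the gap. A secondary remark: you list only $\<1>$, $(\partial_x h)\,\one$, and $\DD\CI(\<2>) = \<20>$ among the components of $\DD H$, but the full expansion $\DD H = \<1> + \<20> + h'\one + 2\<210> + 2h'\<10>$ is needed — the last two terms produce, through their cross products with $\<1>$, precisely the $4\<211>$ and $4h'\<11>$ contributions and hence the factors of $4$ in front of $C_3^{(\eps)}$ and $C_0^{(\eps)}$ that you invoke.
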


\begin{remark}
In order to obtain a limit $(\hat \Pi, \hat \Gamma)$, the renormalisation constants $C_i^{(\eps)}$ should
be chosen in the following way:
\begin{equ}
C_0^{(\eps)} = 0\;,\qquad C_1^{(\eps)} = {c_1 \over \eps}\;,\qquad C_2^{(\eps)} = 4 c_2 \log \eps + c_3\;,\qquad 
C_3^{(\eps)} = - c_2 \log \eps + c_4\;.
\end{equ}
Here, the $c_i$ are constants of order $1$ that are non-universal in the sense that they depend on
the details of the regularisation procedure for $\xi_\eps$. The fact that $C_0^{(\eps)} = 0$ explains why
the corresponding term does not appear in \eqref{e:KPZ}. The fact that the diverging parts of $C_2^{(\eps)}$ and $C_3^{(\eps)}$
cancel in \eqref{e:renormKPZ} explains why this logarithmic sub-divergence was not observed in \cite{MR1462228} for example.
\end{remark}

\begin{proof}
We first note that, as a consequence of Theorem~\ref{thm:Schauder} and of \eqref{e:KPZFP}, one can write
for $t > 0$
\begin{equ}[e:abstract]
H = \CI \bigl((\DD H)^2 + \Xi\bigr) + (...)\;,
\end{equ}
where $(...)$ denotes some terms belonging to $\bar T \subset T$. 

By repeatedly using this identity, we conclude that any solution $H \in \CD^\gamma$ to \eqref{e:KPZFP}
for $\gamma$ greater than (but close enough to) $3/2$ is necessarily of the form
\begin{equ}[e:expH]
H = h\, \one + \<d> + \<2d> + h'\,X_1 + 2 \<21d> + 2h'\, \<1d>\;,
\end{equ}
for some real-valued functions $h$ and $h'$. Note that $h'$ is treated
as an independent function here, we certainly do not mean to suggest that the function $h$ 
is differentiable! Our notation
is only by analogy with the classical Taylor expansion. As an immediate consequence, 
$\DD H$ is given by
\begin{equ}[e:DH]
\DD H = \<1> + \<20> + h'\,\one + 2 \<210> + 2h'\, \<10>\;,
\end{equ}
as an element of $\CD^\gamma$ for $\gamma$ close to $1/2$.
The right hand side of the equation is then given up to
order $0$ by
\begin{equ}
(\DD H)^2 + \Xi = \Xi + \<2> + 2\<21>+ 2h'\, \<1> + \<40> + 4\<211> + 2h'\, \<20> + 4h'\, \<11> + (h')^2\,\one\;. \label{e:RHS}
\end{equ}
Using the definition of $M_\eps$, we conclude that
\begin{equ}
M_\eps \DD H = \DD H - 4 C_0^{(\eps)} \<10>\;,
\end{equ}
so that, as an element of $\CD^\gamma$ with very small (but positive) $\gamma$,
one has the identity
\begin{equ}
(M_\eps \DD H)^2 = (\DD H)^2 - 8C_0^{(\eps)} \<11>\;.
\end{equ}
As a consequence, after neglecting again all terms of strictly positive homogeneity, one has the identity
\begin{equs}
M_\eps \bigl((\DD H)^2 + \Xi\bigr) &= (\DD H)^2 + \Xi - C_0^{(\eps)} \bigl(4\<1> + 4\<20> + 8\<11> + 4h'\,\one\bigr) - C_1^{(\eps)} -C_2^{(\eps)} -4C_3^{(\eps)} \\
&= (M_\eps\DD H)^2 + \Xi - 4 C_0^{(\eps)} \, M_\eps \DD H - (C_1^{(\eps)} +C_2^{(\eps)} +4C_3^{(\eps)})\;.
\end{equs}
Combining this with \eqref{e:relrenorm} and \eqref{e:formulaRf}, we conclude that
\begin{equ}
\CR \bigl((\DD H)^2 + \Xi\bigr) = (\partial_x \CR H)^2 + \xi_\eps - 4 C_0^{(\eps)} \, \partial_x \CR H - (C_1^{(\eps)} +C_2^{(\eps)} +4C_3^{(\eps)})\;,
\end{equ} 
from which the claim then follows in the same way as for Proposition~\ref{prop:PDE}.
\end{proof}

\begin{remark}\label{rem:appl}
Ultimately, the reason why the theory mentioned in Section~\ref{sec:Bony} (or indeed the theory of controlled 
rough paths, as originally exploited in \cite{KPZ}) can also be applied in this case 
is that in \eqref{e:expH}, only \textit{one} basis vector besides those in $\CT$ (i.e.\ besides $\one$ and $X_1$)
comes with a non-constant coefficient, namely the basis vector \<1d>. The methodology explained in
Section~\ref{sec:DPD} on the other hand can be applied whenever \textit{no} basis vector besides those in $\CT$ comes with
a non-constant coefficient.
\end{remark}

\bibliographystyle{./Martin}
\bibliography{./refs}

\end{document}